%% file: Conway_infinite_order.tex
\documentclass[11pt,oneside]{amsart}
\usepackage[margin=1in]{geometry}
\usepackage{graphicx,color}
\usepackage{hyperref}
\usepackage{amssymb,cleveref}
\usepackage{tikz}
\usepackage[normalem]{ulem} 
\usepackage{thmtools, thm-restate}
\usepackage{float}
\usepackage{pinlabel}

\definecolor{azul}{HTML}{0072B2}
\definecolor{verde}{HTML}{009E73}
\definecolor{rojo}{HTML}{D55E00}

\newcommand{\C}{{\ensuremath{\mathbb{C}}}}

\newcommand{\Z}{{\ensuremath{\mathbb{Z}}}}
\newcommand{\Q}{{\ensuremath{\mathbb{Q}}}}
\newcommand{\F}{{\ensuremath{\mathbb{F}}}}
\newcommand{\T}{{\ensuremath{\mathcal{T}}}}
\newcommand{\Con}{\mathcal{C}}

\DeclareMathOperator{\HF}{HF^+}
\DeclareMathOperator{\CF}{CF^+}
\DeclareMathOperator{\CFK}{\widehat{CFK}}

\DeclareMathOperator{\lk}{lk}

\theoremstyle{plain}
\newtheorem{theorem}{Theorem}[section]

\newtheorem{proposition}[theorem]{Proposition}

\newtheorem{conj}[theorem]{Conjecture}

\theoremstyle{definition}

\theoremstyle{remark}
\newtheorem{remark}[theorem]{Remark}

\title{The Conway knot has infinite order in smooth concordance}
\author{Marco Golla}
\address{CNRS and Nantes Universit\'e, Nantes, France}

\author{Juanita Pinz\'on-Caicedo}
\address{University of Notre Dame, Notre Dame, IN USA}

\begin{document}
\maketitle
\begin{abstract}
We prove that the Conway knot has infinite order in the concordance group, and in particular that it is not slice. Ours is an alternative, independent proof with respect to those of Piccirillo and of Donatone, Kegel, Lewark, and Tru\"ol.
\end{abstract}

\section{Introduction}

Two knots $K_0$ and $K_1$ are said to be smoothly concordant if there is a smooth embedding of an annulus $[0,1]\times S^1$ into $[0,1]\times S^3$ that restricts to the given knots at each end. Requiring such an embedding to be locally flat instead of smooth gives rise to the notion of topological concordance. Both kinds of concordance are equivalence relations, and the sets of smooth and topological concordance classes of knots are denoted by $\Con$ and $\Con_{\rm TOP}$, respectively. Moreover, both $\Con$ and $\Con_{\rm TOP}$ are abelian groups with connected sum as their binary operation, identity element the class of knots that bound embedded disks (also called slice knots), and inverses given by reversing orientations and taking mirror images. The Conway knot $C$ (included to the left of \Cref{unknotting}) is an 11-crossing knot that is trivial in $\Con_{\rm TOP}$, but only proven to be non-trivial in $\Con$ by Piccirillo \cite{Piccirillo-Conway} after many decades of unknown status. In this note not only do we provide a new proof of Piccirillo's result, but do so using a more direct, and arguably natural, route. In fact, our chosen path takes us immediately to the following much stronger statement:

\begin{theorem}\label{order-C}
The Conway knot $C$ has infinite order in $\Con$.
\end{theorem}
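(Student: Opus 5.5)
The plan is to obstruct finite order through the branched double cover and its Heegaard Floer $d$-invariant. The Conway knot has trivial Alexander polynomial, so $\det C=1$ and $\Sigma(C)$, the double cover of $S^3$ branched over $C$, is an integral homology sphere. Suppose $nC$ is smoothly slice for some $n\ge 1$. Then the double cover of $B^4$ branched over a slice disc is a rational homology ball $W$ with $\partial W=\#^n\Sigma(C)$. Since $\Sigma(C)$ is a $\Z$-homology sphere, so is $\#^n\Sigma(C)$, and $\det(nC)=1$ is odd, so $H_*(W;\Z)$ has only odd torsion. A homology sphere bounding such a ball has vanishing $d$-invariant, and $d$ is additive under connected sum. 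Hence $n\,d(\Sigma(C))=0$. It therefore suffices to prove $d(\Sigma(C))\neq 0$, which gives the theorem for all $n$ at once.

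To compute $d(\Sigma(C))$ I will use that $C$ has unknotting number one, via the crossing change in \Cref{unknotting}. By the Montesinos trick, a single crossing change relating $C$ to the unknot exhibits $\Sigma(C)$ as a half-integral surgery on a knot in $S^3=\Sigma(U)$. Since $\det C=1$, the surgery coefficient is $\pm\frac12$, so $\Sigma(C)\cong S^3_{\pm1/2}(J)$. Here $J$ is the lift of the arc crossing the two strands, seen in the branched double cover of the unknot. The key steps are:
\begin{enumerate}
\item Draw the crossing-change arc on the unknotted diagram.
\item Isotope the picture so that it is symmetric under the covering involution.
\item Read off $J$ together with its framing, and check that the resulting sign $\pm$ is the one forced by the crossing type.
\end{enumerate}

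Next, apply the Ni--Wu surgery formula for $d$-invariants. For $p/q>0$ it gives $d(S^3_{1/q}(J))=-2V_0(J)$, and for negative coefficients one passes to the mirror: $d(S^3_{-1/2}(J))=2V_0(\overline J)$. So the problem reduces to showing that the relevant $V_0$ is nonzero. To do this I will identify $J$ explicitly and bound $V_0$ from below by a concordance invariant. For instance, $V_0\ge\lceil \nu^+/2\rceil$, which is positive as soon as $\tau>0$ (respectively $\tau(\overline J)>0$). If $J$ turns out to be a positive or strongly quasipositive knot, or a torus or L-space knot, then $\tau$ equals its genus and $V_0$ can be read off from the Alexander polynomial. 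This gives $V_0>0$ and hence $d(\Sigma(C))\ne0$.

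The main obstacle I expect is the Montesinos step: correctly identifying the knot $J$ and the sign of the surgery. A mistake in the sign would flip which of $V_0(J)$ or $V_0(\overline J)$ matters, and the latter may vanish. I will first try to choose the crossing change so that the unknotting arc lifts to a recognizable knot, such as a torus knot or a positive braid closure. I will then double-check the homology sphere by comparing an independent invariant, for example the Casson invariant computed from $\Delta_J''(1)$ against that of $\Sigma(C)$ computed from $C$. If $J$ is not recognizable, I will instead compute $\CFK(J)$, or at least $\tau$, directly from a grid or bordered computation.
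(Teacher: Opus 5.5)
\textbf{The approach cannot work: the quantity you propose to show is nonzero is in fact zero.} The double branched cover is invariant under Conway mutation (Viro). The Conway knot $C$ is a mutant of the Kinoshita--Terasaka knot $KT$, which is ribbon. Hence $\Sigma_2(C)\cong\Sigma_2(KT)$, and this manifold bounds the double cover of $B^4$ branched over a ribbon disc, which is a rational homology ball. By the argument in your own first paragraph, $d(\Sigma_2(C))=0$.

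So no choice of unknotting crossing, and no amount of care with the Montesinos trick or the sign, will rescue the plan. Whatever $J$ you find with $\Sigma_2(C)\cong S^3_{\pm1/2}(J)$, the relevant $V_0$ (of $J$ or of $\overline{J}$, according to the correct sign) must vanish. Since $V_0>0$ exactly when $\nu^+>0$, and $\nu^+\ge\tau$, that knot has $\tau\le 0$. More generally, nothing extracted from $\Sigma_2(C)$ up to rational homology cobordism can distinguish $C$ from a slice knot. This is why the introduction lists 2-fold branched covers among the mutation-invariant tools that are ruled out.

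\textbf{The missing idea is to use a branched cover that mutation does not determine.} The paper works with $\Sigma_4(C)$. It is still an integer homology sphere because $\Delta_C=1$, so your reduction to showing a single $d$-invariant is nonzero carries over verbatim. The steps are:
\begin{enumerate}
\item The crossing change is encoded as $-1$-surgery on an unknot $\gamma$ with $\lk(\gamma,C')=0$, where $C'$ is an unknot.
\item In $\Sigma_4(C')=S^3$, $\gamma$ lifts to four $-1$-framed unknots with pairwise linking number zero.
\item Two of these lifts form an unlink and can be blown down. This presents $\Sigma_4(C)$ as $(-1,-1)$-surgery on a two-component link $J_0\cup J_1$ with linking number $0$.
\item A computer calculation gives $\tau(J_0)=-1$, so $d(S^3_{-1}(J_0))>0$ by Ni--Wu and Hom--Wu.
\item The negative definite 2-handle cobordism along $J_1$ then gives $d(\Sigma_4(C))\ge d(S^3_{-1}(J_0))>0$.
\end{enumerate}

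\textbf{A secondary error.} Your inequality $V_0\ge\lceil\nu^+/2\rceil$ is false: for $T_{3,4}$ one has $V_0=1$ and $\nu^+=3$. What you actually need for positivity is simply that $V_0>0$ if and only if $\nu^+>0$, which holds by the definition of $\nu^+$.
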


We do so by showing that the 4-fold cover of $S^3$ branched over $C$ is of infinite order in the homology cobordism group.

\begin{theorem}\label{order-cover}
The integer homology sphere $\Sigma_4(C)$ has infinite order in $\Theta^3_\Q$.
\end{theorem}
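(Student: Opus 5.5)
To show $\Sigma_4(C)$ has infinite order in $\Theta^3_\Q$, I would find a $\Q$-homology cobordism invariant that is a \emph{homomorphism} to $\Q$ (or to $\Z$) and is nonzero on $\Sigma_4(C)$. The natural candidate is the Heegaard Floer $d$-invariant. It is additive under connected sum, changes sign under orientation reversal, and is invariant under rational homology cobordism, so it descends to a homomorphism $d\colon \Theta^3_\Q\to\Q$. Showing $d(\Sigma_4(C))\neq 0$ would suffice. Since $\Sigma_4(C)$ is an integer homology sphere (the Alexander polynomial of $C$ is $1$), there is a single $d$-invariant to compute.

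The first step is to get a manageable surgery description of $\Sigma_4(C)$. I would use the unknotting information in \Cref{unknotting}: $C$ has unknotting number one, so $C$ is obtained from the unknot by a single crossing change. A crossing change is a $\pm1$ surgery on a small unknot $c$ encircling the crossing with linking number $0$ with the knot. Lifting to the $4$-fold cyclic branched cover of the unknot, which is $S^3$, $c$ lifts to a $4$-component link $\widetilde c$. Then $\Sigma_4(C)$ is integral surgery on $\widetilde c\subset S^3$, with framings determined by the lift, all $\mp1$ or shifted by linking data. This gives an explicit surgery diagram, possibly with a negative-definite (or positive-definite) linking matrix and bounding a definite plumbing-like $4$-manifold $W$.

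The second step is to bound $d$ using this $4$-manifold. If $W$ is negative definite with $H_1(W)=0$, then the Ozsv\'ath--Szab\'o inequality
\[
c_1(\mathfrak s)^2+b_2(W)\le 4d(\Sigma_4(C))
\]
holds for every characteristic vector. Choosing a characteristic vector that maximises the left-hand side and checking that it is strictly positive gives $d(\Sigma_4(C))>0$. If instead the lift is positive definite, I would apply the same inequality to $-\Sigma_4(C)$ to get $d<0$.

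The main obstacle is ensuring the lifted diagram has definite intersection form and that the extremal characteristic vector gives a nonzero bound. Elkies' theorem implies the bound is trivially $\le 0$ when the form is diagonal, so I would need a non-diagonalisable lattice; the linking numbers of the lifts should provide this. If the lattice argument gives only $d\ge 0$ or $d\le0$, I would compute $d$ more precisely. One route is to simplify the lifted link by Kirby moves to surgery on a single knot $K$, then use $d(S^3_{\pm1}(K))=\mp2V_0$ and compute $V_0$ from knot Floer homology, for instance by checking that $K$ is an L-space knot or has a computable genus-one $\CFK$. Finally, $d\neq0$ and additivity give $d(\#^n\Sigma_4(C))=n\,d\neq 0$ for all $n\ge1$, so $\Sigma_4(C)$ has infinite order in $\Theta^3_\Q$. \Cref{order-C} follows because $K\mapsto\Sigma_4(K)$ induces a homomorphism $\Con\to\Theta^3_\Q$.
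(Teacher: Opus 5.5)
Your framework matches the paper's: lift the unknotting curve $\gamma$ to get a surgery description of $\Sigma_4(C)$, then show $d(\Sigma_4(C))\neq 0$. But the step meant to produce $d\neq 0$ fails as proposed. The lifts $\gamma_0,\dots,\gamma_3$ are pairwise unlinked ($\lk(\gamma_i,\gamma_j)=0$) and each has framing exactly $-1$. So the trace of the surgery has intersection form $\langle -1\rangle^{\oplus 4}$. The lattice is diagonal, not non-diagonalisable as you hoped the linking numbers would make it. The inequality $c_1(\mathfrak s)^2+b_2\le 4d$ therefore gives only $d(\Sigma_4(C))\ge 0$.

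Your fallback, reducing to surgery on a single knot, is also not available from this diagram. The natural Kirby move is to blow down the $(-1)$-framed unlink $\gamma_2\cup\gamma_3$. This leaves a two-component link $J_0\cup J_1$ with framings $-1$ and linking number $0$. Its components are knotted (indeed $\tau(J_0)=-1$), and there is no evident way to eliminate another component.

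The missing idea is to use the definite-cobordism inequality and knot Floer homology together, relative to an intermediate manifold. Split the trace of the $(-1,-1)$-surgery on $J_0\cup J_1$ as $X_{-1}(J_0)\cup_{\Sigma'}W'$, where $\Sigma'=S^3_{-1}(J_0)$ and $W'$ is the $2$-handle attached along $J_1$. Because $\lk(J_0,J_1)=0$, $W'$ is a negative definite cobordism with form $\langle -1\rangle$. The diagonal case of the inequality, which you set aside as giving nothing, then yields exactly $d(\Sigma_4(C))\ge d(\Sigma')$.

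Knot Floer input is then needed only for the single knot $J_0$. We have $d(S^3_{-1}(J_0))=2V_0(\overline{J_0})$; note the mirror, since your formula $d(S^3_{-1}(K))=2V_0(K)$ omits it. This is positive as soon as $\tau(J_0)<0$ (Ni--Wu, Hom--Wu), and the paper verifies $\tau(J_0)=-1$ by computer.

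Blowing down $\gamma_2\cup\gamma_3$ before applying monotonicity is essential. Applying it directly to the unknotted $\gamma_0$ would again give only $d\ge d(S^3_{-1}(\gamma_0))=0$.
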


Let us see how \Cref{order-cover} implies \Cref{order-C}.
This is well-known to experts, but we include a sketch of it for completeness.

\begin{proof}[Proof of \Cref{order-C}]
For any prime power $q$, the map $K \to \Sigma_q(K)$ induces a homomorphism $\Con \to \Theta^3_\Q$. This is because the $q$-fold cover of $S^3$ branched over a knot $K$ is a rational homology sphere, and the $q$-fold cover of $[0,1]\times S^3$ branched along a concordance from $K$ to $K'$ is a rational homology cobordism from $\Sigma_q(K)$ to $\Sigma_q(K')$. Thus, if $\Sigma_4(C)$ has infinite order in $\Theta^3_\Q$, a fortiori $C$ has infinite order in $\Con$.
\end{proof}

The proof of \Cref{order-cover} has two steps. The first is topological, and consists in giving a presentation of $\Sigma_4(C)$ as a surgery on a two-component link. The second is Floer-theoretic, and it consists in showing that the Heegaard Floer correction term of $\Sigma_4(C)$ does not vanish. The first step is rather classical, if a bit laborious. The second step, which ultimately relies on a relatively small computer calculation, makes use of a very minimal knowledge of Heegaard Floer homology and knot Floer homology. We note that Theorem~\ref{order-C} was recently independently proven by Donatone, Kegel, Lewark, and Tru\"ol~\cite{DKLT}.

\begin{figure}[h]
\centering
\def\svgwidth{0.275\textwidth}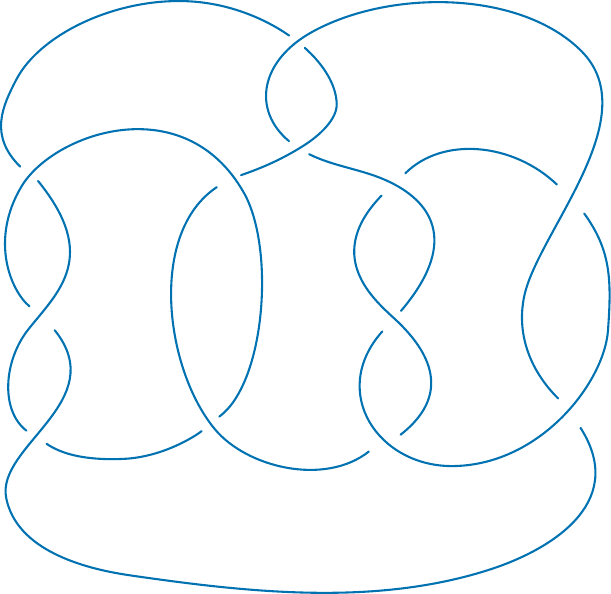 \qquad
\def\svgwidth{0.275\textwidth}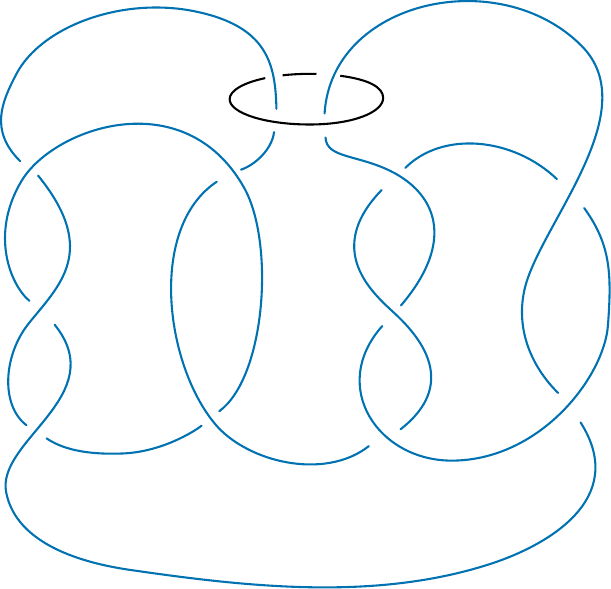 \qquad
\def\svgwidth{0.25\textwidth}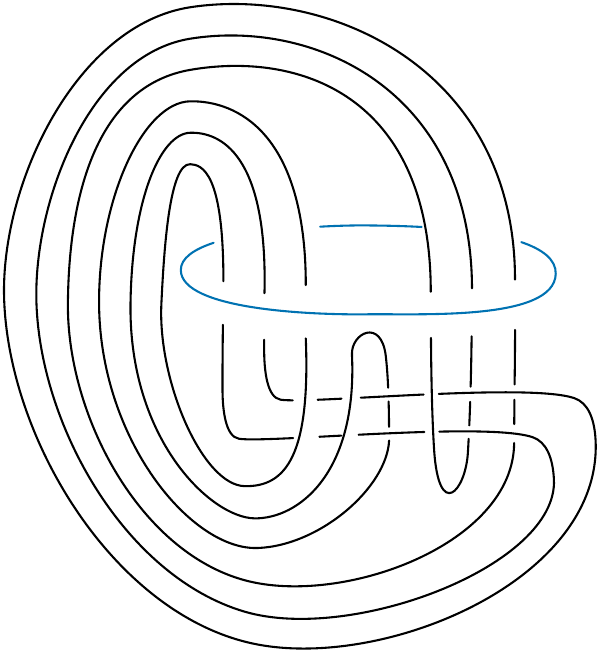
\caption{Left: The Conway knot. Middle and right: Unknotting sequence for the Conway knot. The curve $C'$ is unknotted in $S^3$, and it becomes $C$ in $S^3=S^3_{-1}(\gamma)$}\label{unknotting}
\end{figure}

\subsection{Motivation and perspectives}
There are multiple reasons why we had to wait so long to prove that $C$ is not slice:
on the one hand the knot $C$ has Alexander polynomial $\Delta_C(t)=1$, and work of Freedman implies that $C$ is topologically slice~\cite{Freedman-top, DET}.
Thus, no classical invariant captures information about its smooth concordance type.
On the other hand, $C$ is a mutant of the Kinoshita--Terasaka knot $KT$ \cite{KT}, and the latter is ribbon, and hence smoothly slice\footnote{Conway is credited with the construction of $C$ as a mutant of $KT$ and the reference given is \cite{conway}. We note that this reference does not highlight the knot $C$, nor does it compute its Alexander polynomial. A more accurate reference seems to be the actual talk given by John Conway at the 1967 conference \emph{Computational problems in abstract algebra}, whose Proceedings contain \cite{conway}.}.
As a consequence, to obtain information about the smooth concordance class of $C$, it is necessary to use smooth invariants (thus excluding metabelian obstructions) that in addition are not invariant under mutation (ruling out, for instance, the Jones, HOMFLY, and Kauffman polynomials, the S-equivalence class, and 2-fold branched covers). \\

Piccirillo's proof in~\cite{Piccirillo-Conway} uses the notion of RGB-links, and makes use of the trace embedding lemma and of Rasmussen's $s$-invariant in Khovanov homology. It does, however, fall short of proving that $C$ has infinite order in $\Con$. Donatone, Kegel, Lewark, and Tru\"ol's proof in~\cite{DKLT} also makes use of the $s$-invariant in Khovanov homology, but this time in conjuction with delicate constructions of satellite operations. Both of these results, like ours, rely on computer-aided calculations. Nevertheless, our proof is arguably more direct and follows a well-treaded path (see, for instance, Manolescu and Owens's beautiful paper~\cite{ManolescuOwens} and its generalisation by Jabuka~\cite{Jabuka}), and it has the further advantage of showing that $C$ is not infinitely divisible in $\Con$. It does not, however, show that $C$ generates a $\Z$-summand (nor that it belongs to a $\Z$-summand), which would answer~\cite[Question~5]{DKLT}; see \Cref{r:summand} below.\\

In fact, our proof is slightly more flexible, and we can prove that some of the \emph{generalised Conway knots} $C_{r,n}$ (shown in \Cref{Cmn}) have infinite order. These are the knots shown in \Cref{Cmn}; note that $C = C_{2,1}$.

\begin{theorem}\label{C2n}
The knot $C_{2,n}$ has infinite order in $\Con$ for each $n \neq 0$.
\end{theorem}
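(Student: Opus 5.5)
We follow the same two-step route as for \Cref{order-C}: a topological surgery description of $\Sigma_4(C_{2,n})$, then a Floer-theoretic correction-term computation. As in the proof of \Cref{order-C} from \Cref{order-cover}, the map $K\mapsto\Sigma_4(K)$ induces a homomorphism $\Con\to\Theta^3_\Q$. So it suffices to show that $\Sigma_4(C_{2,n})$ has infinite order in $\Theta^3_\Q$ for every $n\neq 0$.

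\textbf{Step 1 (topology).} First we check that $\Sigma_4(C_{2,n})$ is an integer homology sphere. This should hold because the $C_{2,n}$, like $C$, ought to have trivial Alexander polynomial, being built from the Conway-type tangle with a twist box of $n$ full twists.

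Next we extend the unknotting picture of \Cref{unknotting} to the whole family. We look for a curve $\gamma$ with $\lk(\gamma,C_{2,n})=0$ such that $-1$-surgery on $\gamma$ turns an unknot $C'_n$ into $C_{2,n}$, with the parameter $n$ entering only through the twist region. Then $\Sigma_4(C_{2,n})$ is obtained from $\Sigma_4(C'_n)=S^3$ by surgery on the four lifts of $\gamma$. The linking number $\lk(\gamma,C'_n)$ is even (indeed zero), so $\gamma$ lifts to four components, each with framing $-1$ because it is nullhomologous in the complement of the branch set.

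We then simplify this four-component surgery by Kirby calculus, using the $\Z/4$ symmetry, ideally down to a two-component link $L_n=L_1\cup L_2$ as in the paper's announced strategy for $C$. The $n$-dependence should appear as twisting of $L_n$ along a band or as a framing parameter. An alternative presentation is as a surgery on a single knot in a simpler manifold.

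\textbf{Step 2 (Floer theory).} We show $d(\Sigma_4(C_{2,n}))\neq 0$. Because $d$ is a homomorphism $\Theta^3_\Z\to 2\Z$, a nonzero value gives infinite order in the integral group. For the rational group we use that $d$ is also a rational homology cobordism invariant, and $d(\#^k Y)=k\,d(Y)$ holds for integer homology spheres, so a nonzero $d$ gives infinite order in $\Theta^3_\Q$ as well.

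To compute $d$, we use the two-component surgery presentation and the surgery exact triangle / mapping cone. Surgery on one component $L_1$ should yield a simple manifold with known $d$-invariants, such as $S^3$, a lens space, or a connected sum of such. The second component then becomes a knot $K_n$ in that manifold with integer framing $\pm1$. For $\pm1$ surgery, the $d$-invariant is governed by the $V_0$ invariant of $K_n$: for instance $d(S^3_{-1}(K))=2V_0(\overline K)$ in the relevant sign convention, or $d=-2V_0(K)$ for $+1$ surgery, with the analogous statement in a lens space using $d$ of the ambient manifold.

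For one value of $n$ (in particular $n=1$, which is the case $C=C_{2,1}$), $V_0$ comes from a computer computation of $\CFK$ for the small knot $K_n$. For general $n$, we exploit the fact that $K_n$ varies by twisting. Changing $n$ should correspond to crossing changes of a fixed sign, which move $V_0$ monotonically, as in the inequalities $V_0(K_-)\le V_0(K_+)\le V_0(K_-)+1$. Alternatively, when $n$ enters as a twist along an unknot, an adjunction or genus bound for the surface traced by the twisting gives one-sided bounds. Separately, we check that $d$ has a fixed nonzero sign for $n>0$ and for $n<0$, using mirror symmetry if $C_{2,-n}$ relates to the mirror family.

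\textbf{Main obstacle.} We expect the uniformity in $n$ to be the hardest step. For a single knot the computer handles the calculation, but for all $n\neq 0$ we need a structural argument, monotonicity or an inequality chain, that keeps $d$ away from $0$. The case $n=0$ must be where the bounds degenerate, consistent with $C_{2,0}$ presumably being slice. We would first try to realise the $n$-twisting as iterated crossing changes on $K_n$ and use the crossing-change inequalities for $V_0$ together with explicit computations at $n=\pm1$.
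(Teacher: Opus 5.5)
There is a genuine gap, and it is exactly the step you flag as the ``main obstacle'': you give no argument that controls $d(\Sigma_4(C_{2,n}))$ uniformly in $n$, and the mechanism you sketch does not fit the problem.

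The $n$-dependence does not enter as crossing changes on a single knot $K_n$ in a simple manifold. Passing from $C$ to $C_{2,n}$ replaces $-1$-surgery on $\gamma$ by $-1/n$-surgery, equivalently $-1$-surgery on $n$ parallel, pairwise unlinked copies of $\gamma$. Upstairs, all four lifts $\gamma_0,\dots,\gamma_3$ therefore change at once. After your reduction to two components, both knots and both framings (now $-1/n$) depend on $n$. So your $\pm1$-surgery $V_0$ formula and the crossing-change inequalities do not apply as stated.

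Even for $n=1$, surgery on one component is not a simple manifold. In the paper $S^3_{-1}(J_0)$ is surgery on a knot with $\tau(J_0)=-1$. Moreover, $d(\Sigma_4(C))$ is never computed; it is only bounded below, $d(\Sigma_4(C))\ge d(S^3_{-1}(J_0))>0$, using negative definiteness of the second handle. Your plan to evaluate $d$ exactly via $V_0$ has no footing.

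The missing idea is that the twisting is itself a negative-definite cobordism between the branched covers. Your crossing-change instinct points this way, since the $V_0$ inequality is proved by such a cobordism, but it must be applied to the covers, not to an auxiliary knot. The construction goes as follows.

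\begin{enumerate}
\item The $-1$-surgeries on $n$ parallel $0$-framed push-offs of $\gamma$ give a genus-zero, null-homologous cobordism $F_n$ from the unknot $C'=C_{2,0}$ to $C_{2,n}$ in $[0,1]\times S^3\#n\overline{\mathbb{CP}}{}^2$.
\item Its $4$-fold branched cover $W_n$ is $[0,1]\times S^3$ with $4n$ two-handles attached along the lifts.
\item The lifts of $\gamma$ are pairwise unlinked, and the Seifert framing of $\gamma$ lifts to the Seifert framing of each $\gamma_i$. Hence $W_n$ has intersection form $\langle -1\rangle^{\oplus 4n}$.
\item Splitting $W_n=W_1\cup_{\Sigma_4(C)}X_n$ along the integer homology sphere $\Sigma_4(C)$ makes $X_n$ a negative-definite cobordism from $\Sigma_4(C)$ to $\Sigma_4(C_{2,n})$.
\item Therefore $d(\Sigma_4(C_{2,n}))\ge d(\Sigma_4(C))>0$ for every $n\ge 1$, and $m(C_{2,n})=C_{2,-n}$ handles $n<0$.
\end{enumerate}

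No further computer calculation and no exact value of $d$ is needed.

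Relatedly, your justification of the lifted framing is incomplete. The condition $\lk(\gamma,C')=0$ only guarantees four lifts. The framing of $\gamma_i$ is $-1-\sum_{j\ne i}\lk(\gamma_i,\gamma_j)$, so you need the explicit check that the lifts are pairwise unlinked. That same fact is what makes the form of $W_n$ diagonal.
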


Note that, just like $C$, each of these knots is a mutant of a symmetric union of a diagram for the unknot. In particular, many known concordance invariants will fail to detect its non-sliceness. However, the $d$-invariant of the 4-fold cover does.

We were not able to push the techniques to show that every generalised Conway knot $C_{r,n}$ has infinite order, but we do make the following conjecture.

\begin{conj}
The generalised Conway knot $C_{r,n}$ has infinite order in $\Con$ for $r, n \neq 0$.
\end{conj}

In fact, we expect that for each $r$ there exists a $k$ such that $\Sigma_{2^k}(C_{r,n})$ has non-zero Heegaard Floer correction term.
We also note that, if the minimal such $k$ grows larger with $|r+2n|$, this automatically show that the set $\{C_{r,n}\}$ generates a $\Z^\infty$-subgroup of $\Con$.

\begin{figure}[h]
\centering
\def\svgwidth{0.375\textwidth}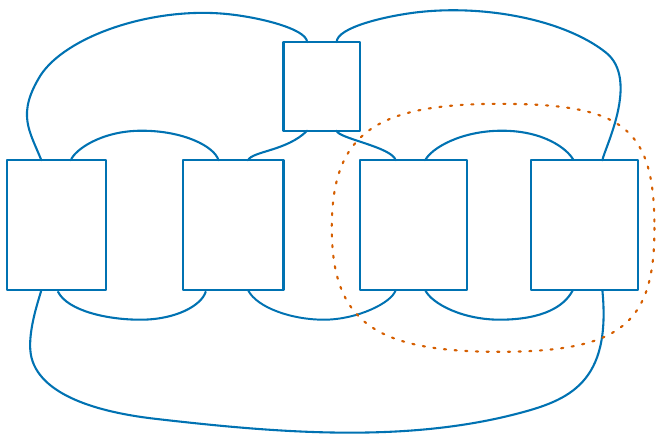\qquad
\def\svgwidth{0.375\textwidth}\input{KT_r,n.pdf_tex}
\caption{Left: The family of generalised Conway knots $C_{r,n}$. Right: The family of generalised Kinoshita--Terasaka knots $KT_{r,n}$. For every fixed tuple $(r,2n)$ the two knots $C_{r,n}$, $KT_{r,n}$ are mutants of one another, that is, one can be obtained from the other after cutting out the region inside the dashed line, and replacing it by its image under a rotation of $180^\circ$. The labels inside the rectangular regions represent the (signed) number of half-twists.}\label{Cmn}
\end{figure}

\subsection*{Acknowledgements}
We thank Tye Lidman for enriching conversations, and Marc Kegel, Lisa Piccirillo, and Paula Tru\"ol for comments on an earlier draft.

\subsection*{Computational Resource Disclosure} The authors did not use AI or LLM tools for any aspect of this research or the writing of this manuscript. SnapPy~\cite{SnapPy} and KLO~\cite{KLO} were used for all computations in support of this project.

\section{Surgery description for $\Sigma_4(C)$}\label{surgery}

In this section we introduce $\Sigma_4(C)$, the 4-fold cover of the 3-sphere branched along the Conway knot, and describe a very convenient surgery presentation for it. This is the first step in our proof of \Cref{order-cover}.

\begin{proposition}\label{p:two-component}
Let $C$ be the Conway knot and denote by $\Sigma_4(C)$ the $4$-fold cyclic cover of $S^3$ branched over $C$. The manifold $\Sigma_4(C)$ admits a surgery description along a $2$-component link $J_0\cup J_1$ with linking matrix $\left[\begin{smallmatrix}-1 & \phantom{-}0 \\\phantom{-}0 & -1\end{smallmatrix}\right]$.
\end{proposition}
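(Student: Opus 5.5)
The plan is to use the unknotting sequence of \Cref{unknotting}. There is an unknot $C' \subset S^3$ and an unknot $\gamma$, disjoint from $C'$, with $\lk(\gamma, C') = 0$ (since the crossing change is realised by a $\pm1$ twist along a disc meeting $C'$ in two points of opposite sign), such that performing $-1$-surgery on $\gamma$ (the sign chosen to match the figure) returns $S^3$ and carries $C'$ to $C$. In other words, the pair $(S^3, C)$ equals $(S^3_{-1}(\gamma), C')$. Since $\gamma$ is disjoint from the branch locus, the $4$-fold cover of $S^3$ branched over $C$ equals the $4$-fold cyclic branched cover of $S^3$ over the unknot $C'$, which is again $S^3$, after surgery on the preimage of $\gamma$.

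Because $\lk(\gamma, C')=0$, $\gamma$ lifts to four disjoint curves $\tilde\gamma_0,\dots,\tilde\gamma_3$ in $\Sigma_4(C') = S^3$. These curves are permuted cyclically by the deck transformation $\tau$. The surgery on $\gamma$ with framing $-1$ lifts to surgery on each $\tilde\gamma_i$ with the lifted framing. The framing coefficient is preserved, $-1$ on each component, since the covering is unbranched near $\gamma$ and the Seifert longitude lifts to the Seifert longitude. Concretely, I would draw $C'$ as a round circle and $\gamma$ as a curve hooking around the spanning disc of $C'$. Cutting along that disc and gluing four copies gives an explicit diagram of the 4-component link $L = \tilde\gamma_0\cup\cdots\cup\tilde\gamma_3$, all framed $-1$. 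The pairwise linking numbers are then read off the diagram. So far this presents $\Sigma_4(C)$ as surgery on a 4-component link.

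The key step is to reduce from four components to two. The trick is to use the intermediate 2-fold cover. First take the 2-fold branched cover of $(S^3_{-1}(\gamma), C')$. This gives $S^3$ with a lifted unknot $\tilde C'$, together with two lifts $\gamma_0,\gamma_1$, each $-1$-framed. The resulting manifold is $\Sigma_2(C)$, and the image of the branch locus is the lift of $C$. Then $\Sigma_4(C)$ is the 2-fold cover of $\Sigma_2(C)$ branched over that lift. Alternatively, and I expect this is the intended route, one Kirby-calculus-simplifies the 4-component picture. Blowing down, or sliding, two of the $-1$-framed unknotted lifts (possible if some $\tilde\gamma_i$ is an unknot, as the figure suggests) absorbs them. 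The resulting twist changes the remaining two components $J_0, J_1$ and their framings by $-\lk^2$. One needs linking numbers so that after the blow-downs the framings are still $-1$ and $\lk(J_0,J_1)=0$.

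The main obstacle is this diagrammatic bookkeeping. This means producing the honest picture of the lifts, then finding the blow-downs or handle slides that cut the link to two components with linking matrix $\operatorname{diag}(-1,-1)$. Before committing to a specific sequence of moves, I would check the linking numbers with SnapPy. At a minimum the determinant must be $\pm1$, consistent with $\Sigma_4(C)$ being an integer homology sphere since $\Delta_C=1$.
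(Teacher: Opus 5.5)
Your architecture matches the paper's: lift the $-1$-framed curve $\gamma$ to four curves $\gamma_0,\dots,\gamma_3$ in $\Sigma_4(C')=S^3$, then blow down two of them. The gap is in how you get the framings. ``Unbranched near $\gamma$'' only says that curves on $\partial N(\gamma)$ lift to $\partial N_i$. It does not say that the lift $\lambda_i$ of the Seifert longitude of $\gamma$ is the Seifert longitude of $\gamma_i$, and in general it is not.

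Let $f$ be the covering map and $D$ a disc bounded by $\gamma$. Then $\lambda_i$ is a push-off of $\gamma_i$ into $F=f^{-1}(D)$, whose boundary is $\gamma_0\cup\dots\cup\gamma_3$. Cutting off a collar shows that $\lambda_i+\sum_{j\neq i}\gamma_j$ bounds in the complement of $\gamma_i$. Hence $\lk(\gamma_i,\lambda_i)=-\sum_{j\neq i}\lk(\gamma_i,\gamma_j)$, and the lifted surgery coefficient is $-1-\sum_{j\neq i}\lk(\gamma_i,\gamma_j)$.

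This equals $-1$ here only because the lifts are pairwise algebraically unlinked, which the paper reads off the diagram. So the linking numbers you postpone to the end are already needed at this step. What does lift to itself, for the cut-and-paste diagram, is the blackboard framing, as in the paper's remark; converting back to Seifert framing then needs the writhes of the diagrams of $\gamma$ and of the $\gamma_i$.

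The reduction step also needs a sharper input than ``some $\gamma_i$ is unknotted.'' All four lifts are individually unknotted, but blowing one down twists the others. What you need is a pair forming a $2$-component unlink. The paper checks by explicit isotopy that two consecutive lifts, $\gamma_2\cup\gamma_3$, form the unlink, whereas the non-consecutive pair $\gamma_0\cup\gamma_2$ does not, so the choice of pair matters. Then, since all $\lk(\gamma_k,\gamma_l)$ vanish, blowing down $\gamma_2\cup\gamma_3$ leaves framing $-1$ on $J_0$ and $J_1$, and $\lk(J_0,J_1)=\lk(\gamma_0,\gamma_1)=0$.

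Your bookkeeping formula has the wrong sign. Blowing down a $-1$-framed unknot $U$ adds $\lk(K,U)^2$ to the framing of $K$ and adds $\lk(K,U)\lk(K',U)$ to $\lk(K,K')$; it is the $+1$ blow-down that subtracts. This is harmless here only because the relevant linking numbers are $0$.

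Finally, the intermediate $2$-fold cover reduces nothing. Each of its two lifts of $\gamma$ has linking number $0$ with the lifted branch curve. So each splits again in the next double cover, and you get back exactly the same four-component link $\gamma_0\cup\dots\cup\gamma_3$.
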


\begin{figure}[h]
\centering
\def\svgwidth{0.4\textwidth}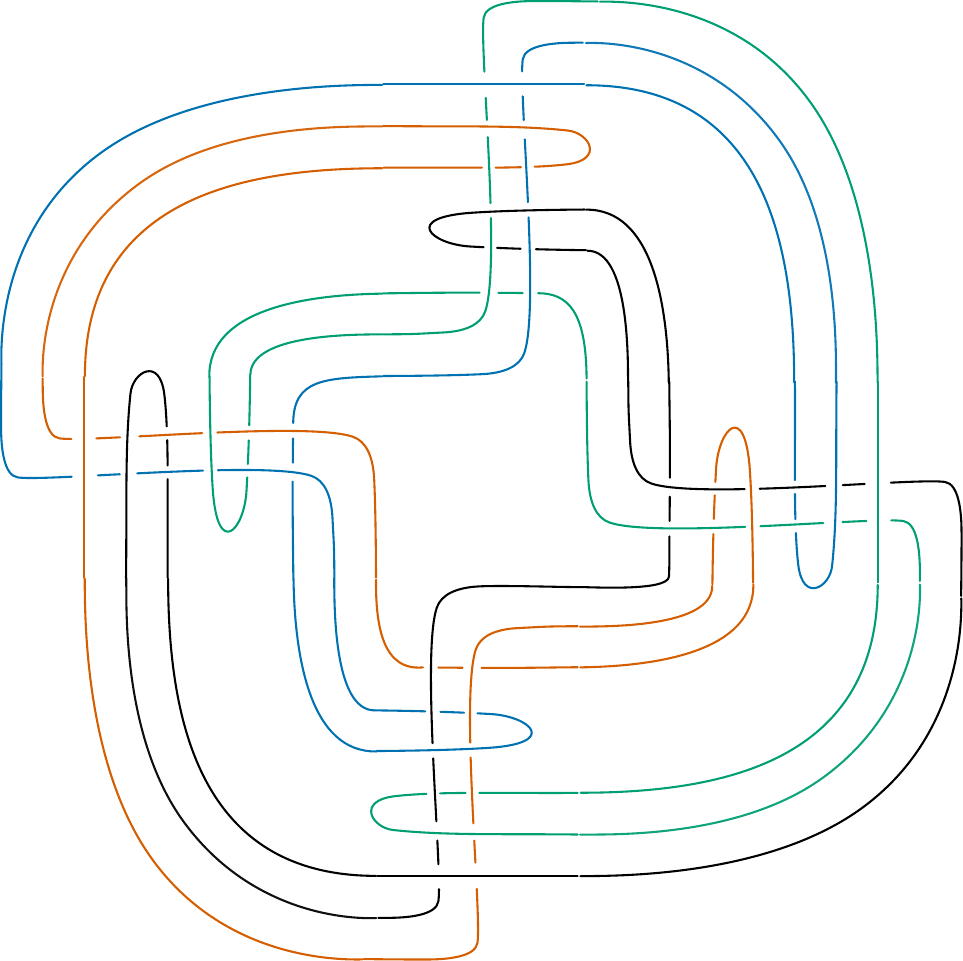\qquad
\caption{Surgery description of $\Sigma_4(C)$ obtained by lifting an unknotting curve $\gamma\subseteq S^3\setminus N(C)$.}\label{4-fold}
\end{figure}

\begin{proof} A surgery description of the branched cover $\Sigma=\Sigma_4(C)$ can be obtained from the surgery description of $C$ shown in \Cref{unknotting}. More precisely, let $C'\cup \gamma$ be the 2-component link that appears in the middle image of \Cref{unknotting}. Notice that $\lk(C',\gamma)=0$, and each of $C' ,\gamma$ is unknotted in $S^3$. Then $C$ is the knot obtained from $C'$ after performing $-1$-surgery on $S^3$ along $\gamma$.

Consider the covering map $f:\Sigma_4(C')\to S^3$. Since $C'$ is unknotted, then $S^3=\Sigma_4(C' )$. Moreover, since $\lk(C',\gamma)=0$, the link $f^{-1}(\gamma)$ consists of four components $\gamma_0\cup \gamma_1\cup \gamma_2\cup \gamma_3$. This is shown in \Cref{4-fold} and it is easy to see there that each curve $\gamma_i$ is unknotted.

Let us look at $\gamma$ as a framed knot, with framing $-1$ with respect to the Seifert framing.
Since $\lk(C',\gamma) = 0$, the lift of $\gamma$ as a framed link gives a surgery description of $\Sigma_4(C)$.
Let $N(\gamma) \subset S^3\setminus C'$ be a small tubular neighbourhood of $\gamma$, and $N_i \subset S^3$ be the component of $f^{-1}(N(\gamma))$ that contains $\gamma_i$, for $i = 0,\dots,3$. To compute the surgery coefficients for this description, let $\gamma^*$ be the $(-1,1)$-curve in $\partial N(\gamma)$, and denote by $\gamma^*_i$ the lift of $\gamma^*$ that lies in $\partial N_i$. The manifold $\Sigma$ is then the filling of
\[
f^{-1}\left(S^3\setminus N(\gamma)\right)=\Sigma_4(C' )\setminus\left(N_0\cup N_1\cup N_2\cup N_3 \right)
\]
that caps off the curves $\gamma^*_i$ with the meridional disks of the attached solid torus.
Thus, the framing numbers are precisely given by $\lk(\gamma_i,\gamma^*_i).$
Denote by $\mu,\lambda$ a meridional-longitude pair for $\gamma\subseteq S^3$, and by $\mu_i,\lambda_i$ their respective lifts to $\partial N_i$ so that the framing curve $\gamma^*_i$ is homologous to $-\mu_i+\lambda_i$. Notice that the curve $\mu_i$ is nothing other than a meridional curve for $\gamma_i$ in $S^3=\Sigma_4(C')$. To better understand the curve $\lambda_i$, denote by $D$ a disc in $S^3$ bounded by $\gamma$, which we can assume to be transverse to $C'$. The surface $F=f^{-1}(D)$ is then a surface with boundary $\gamma_0\cup \gamma_1\cup \gamma_2\cup \gamma_3$, and $\lambda_i$ is given by pushing $\gamma_i$ into the interior of $F$. This shows that $\lambda_i$ is homologous to the link $-\bigcup_{j \neq i} \gamma_j$ and so satisfies
\[
\lk(\gamma_i,  \lambda_i)= \lk(\gamma_i, -\cup_{j \neq i} \gamma_j)=-\sum_{j \neq i} \text{lk}(\gamma_i, \gamma_j)=0.
\]
The last equality follows from the explicit computation $\lk(\gamma_i,\gamma_j)=0$ $(i\neq j)$ using the diagram in \Cref{4-fold}. This shows that the surgery coefficients are given by
\[
\lk(\gamma_i,\gamma^*_i)=\lk(\gamma_i,-\mu_i+\lambda_i)=-\lk(\gamma_i,\mu_i)+\lk(\gamma_i,\lambda_i)=-1.
\]

Next we want to reduce the number of components in the surgery description for $\Sigma$. A series of elementary isotopies show that the link $\gamma_2\cup\gamma_3$ is the 2-component unlink (see \Cref{consecutive}). Since each component has framing number $-1$, they can both be blown down. The knot $J_i$ is the result of the blow-down on the knot $\gamma_i$ (indexes taken mod $4$). Since $\lk(\gamma_i,\gamma_j)=0$, for $i\in\{0,1\},\, j\in\{2,3\}$, the framing number of $J_i$ is also $-1$, and $\lk(J_0,J_1)=\lk(\gamma_0,\gamma_1)=0$, as sought.
\end{proof}

\begin{remark}
An alternative way of computing the surgery framing along $\gamma_i$ in $S^3$ is the following (see~\cite[Section 10.D]{Rolfsen}). The blackboard framing for a given diagram lifts to the blackboard framing when taking covers as in the previous proof, and the blackboard framing differs from the Seifert framing by the writhe of the diagram. Now, in the proof above the writhes of the diagrams of $\gamma$ and of $\gamma_i$ are both zero, and therefore the $-1$-framing with respect to the Seifert framing is also the $-1$-framing with respect to the blackboard framing. It follows that the $-1$-framing lifts to the $-1$-framing.
\end{remark}

\section{Heegaard Floer homology}\label{HF}

Heegaard Floer homology is a package of invariants of closed, oriented, and connected 3-manifolds, which was introduced by Ozsv\'ath and Szab\'o in~\cite{OSz-HD3MI, OSz-PA}.
Since the only 3-manifolds that we will be concerned with in this paper are integer homology spheres, we give a description of the invariant which is tailored for this context.
We will work with coefficients in $\F = \Z/2\Z$, the field of two elements.

The flavour of Heegaard Floer homology we are interested in is the `plus' version.
To each integer homology 3-sphere $Y$, it associates a $\Z$-graded $\F[U]$-module $\HF(Y)$. Multiplication by $U$ is an endomorphism of $\HF(Y)$ of degree $-2$.
$\HF(Y)$ is the homology of a complex $\CF(Y)$, which depends on the choice of a Heegaard diagram for $Y$ and of a basepoint (which we suppress from the notation).
$\HF(Y)$ contains a unique $\F[U]$-submodule which is isomorphic to $\T^+ := \F[U,U^{-1}]/U\cdot \F[U]$.
The minimal degree of an element in this distinguished submodule is called the correction term of $Y$, and it is denoted with $d(Y)$.

We will now list the properties of correction terms that we need.

\begin{theorem}[{Ozsv\'ath--Szab\'o~\cite{OSz-absolutely}}]\label{t:dinv}
Let $Y$ and $Y'$ be (oriented) integer homology spheres.
\begin{itemize}
\item $d(Y)$ is an even integer.
\item $d(Y\# Y') = d(Y) + d(Y')$.
\item If there exists a negative definite cobordism from $Y'$ to $Y$, then $d(Y') \le d(Y)$. In particular, if $Y$ bounds a rational homology ball, then $d(Y) = 0$.
\end{itemize}
\end{theorem}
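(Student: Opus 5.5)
The statement is quoted from Ozsv\'ath--Szab\'o~\cite{OSz-absolutely}, so the plan is to recall how each item follows from the structure of $\HF$ and its cobordism maps. Throughout, $Y$ is an integer homology sphere, so it has a unique spin$^c$ structure and $\HF(Y)$ carries an absolute $\Q$-grading.

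\emph{Parity.} First I would show that the absolute grading on $\HF(Y)$ takes values in $\Z$, and that the tower $\T^+$ lives in even degrees. The route is through the relative $\Z/2$-grading, which is determined by intersection signs of $\mathbb{T}_\alpha$ and $\mathbb{T}_\beta$. For an integer homology sphere, $HF^\infty(Y)\cong\F[U,U^{-1}]$ is supported in a single parity. The Euler characteristic of $\widehat{HF}(Y)$ is $|H_1(Y)|=1$, and the absolute grading is normalised by cobordisms from $S^3$, whose tower bottom sits in degree $0$. Together these put the tower in even degrees, so $d(Y)\in 2\Z$.

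\emph{Additivity.} Next I would use the K\"unneth formula
\[
CF^-(Y\# Y')\simeq CF^-(Y)\otimes_{\F[U]}CF^-(Y'),
\]
which is grading-preserving up to the standard shift. Inverting $U$, the tensor product of the two towers is the tower of $Y\#Y'$. The top of the $CF^-$ tower has degree $d-2$, so the top of the tensor-product tower has degree $d(Y)+d(Y')-2$. Translating back to the plus version through the long exact sequence gives $d(Y\#Y')=d(Y)+d(Y')$.

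\emph{Inequality.} This is the substantive step. Let $W\colon Y'\to Y$ be negative definite; after surgering out loops we may assume $b_1(W)=0$. For each spin$^c$ structure $\mathfrak s$ on $W$, the map $F^+_{W,\mathfrak s}$ shifts degree by
\[
\frac{c_1(\mathfrak s)^2-2\chi(W)-3\sigma(W)}{4}=\frac{c_1(\mathfrak s)^2+b_2(W)}{4}.
\]
Because $b_2^+(W)=0$, the induced map on $HF^\infty$ is an isomorphism; this is the key input, proved via the surgery exact triangle and the blow-up formula. So $F^+_{W,\mathfrak s}$ carries the tower of $Y'$ into the tower of $Y$, and its restriction there is multiplication by a power of $U$, which gives
\[
d(Y)\ge d(Y')+\frac{c_1(\mathfrak s)^2+b_2(W)}{4}.
\]
It remains to choose $\mathfrak s$ with $c_1(\mathfrak s)^2+b_2(W)\ge 0$. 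The intersection lattice of $W$ is negative definite and unimodular, since both boundary components are homology spheres, and $c_1$ ranges over all characteristic vectors. Elkies' theorem then provides a characteristic vector $c$ with $c^2\ge -b_2(W)$. This yields $d(Y')\le d(Y)$.

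\emph{Rational balls.} If $Y$ bounds a rational homology ball $B$, removing a small ball from $B$ gives a cobordism from $S^3$ to $Y$ with $b_2=0$, so $0=d(S^3)\le d(Y)$. Reversing orientation gives a cobordism from $S^3$ to $-Y$, so $0\le d(-Y)$. Finally $d(-Y)=-d(Y)$, by the duality $\HF(-Y)\cong HF_-(Y)$ up to the standard regrading, and hence $d(Y)=0$.

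The main obstacle is the inequality: one must establish that negative definite cobordisms induce isomorphisms on $HF^\infty$, and then invoke Elkies' lattice-theoretic bound to get a characteristic vector of the right square.
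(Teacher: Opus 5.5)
Your outline follows the standard Ozsv\'ath--Szab\'o argument that the paper cites without proof. The second and third items are handled correctly: the K\"unneth formula with its shift by $2$, the reduction to $b_1(W)=0$, the degree shift $(c_1(\mathfrak s)^2+b_2(W))/4$, the isomorphism on $HF^\infty$ when $b_2^+=0$, and Elkies' bound are exactly what is needed. The rational-ball consequence also follows as you say. Alternatively, without duality, read the punctured ball upside down as a cobordism from $Y$ to $S^3$ with $b_2=0$.

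The step that does not close as written is parity. The Euler characteristic $\chi(\widehat{HF}(Y))=1$ only fixes the absolute $\Z/2$-grading of the tower, and even that needs the remark that $HF_{\rm red}$ contributes nothing to $\chi$ because $U$ preserves the $\Z/2$-grading. To say anything about the $\Q$-degree $d(Y)$, you need the separate fact that the absolute $\Q$-grading reduces mod $2$ to this $\Z/2$-grading, and you never state it.

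The phrase ``normalised by cobordisms from $S^3$'' does not supply this fact. For a $2$-handle cobordism $W\colon S^3\to Y$ the form is unimodular, so $c_1^2\equiv\sigma(W)\pmod 8$. Hence the shift $(c_1^2-2\chi-3\sigma)/4$ is congruent to $b_2^+(W)$ mod $2$, and it can be odd; for example, $+1$-surgery on the unknot gives shifts $k^2+k-1$. Such maps vanish on $HF^\infty$ and so say nothing about where the tower sits.

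You can either cite the compatibility of the two gradings or, more economically, reuse your third-item machinery:
\begin{itemize}
\item After blowing up and diagonalising a unimodular surgery presentation, $Y$ is reached from $S^3$ by a sequence of $\pm1$-surgeries on knots through integer homology spheres.
\item For a $-1$-surgery the trace $W$ is a negative-definite cobordism with $b_1=0$ and $b_2=1$; for a $+1$-surgery the reversed trace $-W$ is.
\item On such a cobordism $F^\infty$ is an isomorphism of degree $(1-(2k+1)^2)/4=-k(k+1)$, where $c_1(\mathfrak s)$ is $2k+1$ times a generator; this is an even integer.
\end{itemize}
Inducting from $S^3$, whose $HF^\infty$ is supported in even degrees, shows that $HF^\infty(Y)$ is supported in even integer degrees. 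This gives integrality and evenness of $d(Y)$ at once.
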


There is a relative version of Heegaard Floer homology for knots in 3-manifolds~\cite{Rasmussen-PhD, OSz-knots, OSz-tau}. We restrict to knots in $S^3$, which is enough for our purposes.

If $K$ is an oriented knot in $S^3$, we can find a complex $\CF(S^3,K)$ coming from a Heegaard diagram of $S^3$ which is adapted to the knot (in a suitable sense). The complex $\CFK(S^3,K)$, defined as the cokernel of multiplication by $U$ on $\CF(S^3,K)$, is a chain complex whose homology is 1-dimensional. It comes equipped with an Alexander filtration $A$ which is compatible with the differential. The $\tau$-invariant of $-K$, the mirror of $K$, is the minimal Alexander filtration level such that the inclusion induces a surjection on homology: more formally, the inclusion $\CFK_{A \le s}(S^3,K) \hookrightarrow \CFK(S^3,K)$ induces the zero-map for $s < \tau(-K)$, and a surjection for $s \ge \tau(-K)$. Moreover, $\tau(-K) = -\tau(K)$.

\begin{theorem}[{\cite[Prop. 1.6]{NiWu}},{\cite[Prop 2.3]{hom-wu}}]\label{t:homwu}
If $\tau(K) < 0$, then $d(S^3_{-1}(K)) > 0$.
\end{theorem}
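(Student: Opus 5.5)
The statement to prove is Theorem~\ref{t:homwu}: if $\tau(K)<0$ then $d(S^3_{-1}(K))>0$. I would work through the mapping cone formula of Ozsv\'ath--Szab\'o, or more directly the large-surgery/Ni--Wu formula for correction terms of surgeries.

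\textbf{Step 1: reduce to $V_0$.} Ni--Wu give, for $p>0$,
\[
d(S^3_{p}(K),i)=d(L(p,1),i)-2\max\{V_i,V_{p-i}\}.
\]
Use the orientation reversal $S^3_{-1}(K)=-S^3_{+1}(-K)$, where $-K$ is the mirror. Then
\[
d(S^3_{-1}(K)) = -d(S^3_{1}(-K)) = 2V_0(-K),
\]
because $d(L(1,1))=d(S^3)=0$. So it suffices to show $V_0(-K)>0$.

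\textbf{Step 2: show $V_0(-K)\ge 1$ when $\tau(-K)>0$.} Here $\tau(-K)=-\tau(K)>0$. Recall that $V_0$ is defined through the quotient complex $A_0^+=C\{\max(i,j)\ge 0\}$ of $CFK^\infty$. The number $V_0$ is the amount by which the grading of the bottom of the $U$-tower in $H_*(A_0^+)$ drops relative to $B^+=C\{i\ge 0\}$, so $V_0=0$ exactly when the projection $A_0^+\to B^+$ is surjective on the bottom element.

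Suppose $V_0(-K)=0$. Then some cycle in $C\{\max(i,j)\ge0\}$ of grading $0$ maps to the generator of $H_*(C\{i=0\})=\widehat{HF}(S^3)$. Consider this cycle in filtration level $\{i\le 0\}$, truncated to $\{i=0\}$. The cycle $x=\sum x_k$ lives in $\{\max(i,j)\ge 0\}$. Its $i=0$ part has $j\ge$ anything, but a grading count forces the generator to be supported at $j\le 0$ in $\widehat{CFK}$. This would mean $\tau(-K)\le 0$, a contradiction.

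The cleaner route is the standard inequality $V_0\ge \lceil \nu^+/2\rceil$, or Hom--Wu's comparison $\nu^+\ge\tau$, giving $V_0(-K)\ge 1$ when $\tau(-K)\ge1$. For the proof I would establish this directly. If $V_0=0$, then the generator of $\widehat{HF}(S^3)$ lifts to $A_0^+$. Multiplying by the inclusion of $C\{i\le 0, j\le 0\}$ then shows that the generator is hit from $\widehat{CFK}_{A\le 0}$, so $\tau\le 0$.

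\textbf{Main obstacle.} The delicate point is exactly this filtration argument relating a surviving class in $A_0^+$ to the Alexander filtration of $\widehat{CFK}$ at level $0$. It requires a careful chase with the $U$-action and gradings: a grading-$0$ class in $A_0^+$ hitting $\widehat{HF}$ must come from the corner $\{i=0,j\le0\}$ modulo boundaries. I would handle it by using the reduced model of $CFK^\infty$ and the symmetry $(i,j)\leftrightarrow(j,i)$.
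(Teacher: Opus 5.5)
Your main route is correct, and it is the combination the paper relies on; the paper gives no proof of its own and only cites Ni--Wu and Hom--Wu. Ni--Wu's formula gives $d(S^3_{-1}(K))=-d(S^3_{1}(-K))=2V_0(-K)$. Hom--Wu's inequality $\nu^+\ge\tau$ then gives $\nu^+(-K)\ge-\tau(K)\ge 1$. Since $\nu^+=\min\{s\ge 0 : V_s=0\}$, this forces $V_0(-K)\ge 1$, so in fact $d(S^3_{-1}(K))\ge 2$.

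Two corrections are needed.

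First, the ``standard inequality'' $V_0\ge\lceil\nu^+/2\rceil$ is false. For $T_{3,4}$ one has $V_0=V_1=V_2=1$ and $V_3=0$, so $\nu^+=3$ while $V_0=1<2$. You do not need this inequality; the definition of $\nu^+$ is all that is used.

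Second, your ``direct'' argument does not work as written. No grading count forces the class to be supported at $j\le 0$, the phrase ``multiplying by the inclusion of $C\{i\le 0,j\le 0\}$'' has no meaning, and the $(i,j)$-symmetry is not needed. The correct chase is as follows.
\begin{enumerate}
\item Suppose $V_0=0$. Then $v_0^+$ restricted to the tower of $H_*(A_0^+)$ is an isomorphism. So the bottom tower class $t\in H_0(A_0^+)$ maps to the bottom class of $H_*(B^+)$ and satisfies $Ut=0$.
\item By the exact triangle of $0\to\hat A_0\to A_0^+\xrightarrow{U}A_0^+\to 0$, where $\hat A_0=C\{\max(i,j)=0\}$, the class $t$ lifts to some $\hat t\in H_0(\hat A_0)$.
\item By naturality, $\hat v_0(\hat t)$ maps to a nonzero class under the injection $\widehat{HF}(S^3)\to\HF(S^3)$. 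Hence $\hat v_0(\hat t)$ is the generator of $\widehat{HF}(S^3)$.
\item The horizontal arm $C\{i<0,j=0\}$ is a subcomplex of $\hat A_0$, with quotient $C\{i=0,j\le 0\}$. That quotient is in turn a subcomplex of $C\{i=0\}$. So $\hat v_0$ factors through $H_*(C\{i=0,j\le 0\})$, and the inclusion of this filtration level surjects onto $\widehat{HF}(S^3)$, which gives $\tau\le 0$.
\end{enumerate}
This is exactly the chain $\tau\le\nu\le\nu^+$ underlying Hom--Wu's result.
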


\section{Infinite order}
In this section, we use the topological input from \Cref{surgery} and the Floer-theoretic invariants from \Cref{HF} to prove \Cref{order-cover}. The key result is the following. 

\begin{proposition}\label{p:tau-negative} Let $J_0\cup J_1$ be as in \Cref{p:two-component}, so that $\Sigma_4(C)$ is obtained as $(-1,-1)$-surgery on $J_0 \cup J_1$. Then we have 
$\tau(J_0)=-1$.
\end{proposition}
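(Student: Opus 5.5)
The plan is to compute $\tau(J_0)$ directly from an explicit diagram of $J_0$, using a computer calculation of knot Floer homology. The acknowledgements mention KLO and SnapPy; Sz\'abo's knot Floer calculator or Ozsv\'ath--Szab\'o's bordered program would also work.

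First I will produce an explicit planar diagram of $J_0$. Starting from the lifted link $\gamma_0\cup\gamma_1\cup\gamma_2\cup\gamma_3$ of \Cref{4-fold}, I isotope $\gamma_2\cup\gamma_3$ into a standard unlink position as in \Cref{consecutive}. I then blow down these two $(-1)$-framed unknots. Each blow-down adds a full positive twist to the strands of $\gamma_0$ passing through a spanning disc of the blown-down component. Forgetting $\gamma_1$, this gives a diagram of $J_0$. Because $\lk(\gamma_0,\gamma_j)=0$, the strands of $\gamma_0$ through each disc come in oppositely oriented pairs, so the twists are genuinely knotting. I will record the diagram as a PD code and simplify it in SnapPy to reduce the crossing number before the Floer computation. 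As a sanity check, by the deck-transformation symmetry $J_1$ should be isotopic, or at least mirror-related, to $J_0$.

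Next I compute $\tau$. The quickest route is to feed the PD code into a program computing $\CFK$ together with its Alexander and Maslov gradings and the full complex, and read off $\tau$ directly. If only the homology is available, I will first check whether $J_0$ is $\delta$-thin. This might happen if the diagram is, or becomes, alternating or quasi-alternating, in which case $\tau=-\sigma/2$ can be read off from the signature computed in SnapPy. If the knot is not thin, I will instead look for crossing changes or a cobordism sandwiching the value. For a lower bound, exhibit a genus-one concordance or surface to a knot with known $\tau$. For an upper bound, use $|\tau|\le g_4$ together with a negative-to-positive crossing-change inequality relative to a knot with $\tau=-1$, such as the negative trefoil.

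I expect the main obstacle to be the first step: tracking the lift and the two blow-downs faithfully enough to get a correct, manageable diagram of $J_0$, since the full twists can create many crossings. The computation of $\tau$ itself is mechanical once a correct PD code is in hand. Sign conventions also need care: $\tau$ of the knot versus its mirror, and which handedness the $-1$ blow-down produces. I will check these by running the same pipeline on a test case where the answer is known, namely blowing down a $-1$ unknot linking an unknot twice, which yields a trefoil.
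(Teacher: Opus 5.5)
Your main route is the same as the paper's. There, $\tau(J_0)=-1$ is obtained purely by computer: a diagram of $J_0$ is produced in KLO, imported into SnapPy, and Szab\'o's knot Floer algorithm is run on it, so your proposal is correct in its essential step.

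Two side remarks need repair, though neither is load-bearing. First, by hand the paper only gets $-1\le\tau(J_0)\le 0$ (Remark~\ref{tau-bounds}), and $|\tau|\le g_4$ can never yield $\tau(J_0)\le -1$. That direction would need an explicit sequence of negative-to-positive crossing changes from $J_0$ to a knot with $\tau=-1$, which you have not produced. Second, the deck transformation sends $(\gamma_0;\gamma_2,\gamma_3)$ to $(\gamma_1;\gamma_3,\gamma_0)$ rather than $(\gamma_1;\gamma_2,\gamma_3)$, so it gives no reason to expect $J_1$ to be isotopic or mirror to $J_0$.
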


\begin{proof}
The computation was performed in SnapPy~\cite{SnapPy}, by importing a knot diagram from KLO~\cite{KLO}. The KLO and SnapPy files used in the computation are available at \cite{github}.
\end{proof}

\begin{remark}\label{tau-bounds} While the precise calculation $\tau(J_0)=-1$ required the use of Szab\'o's algorithm, simple bounds for this quantity can be obtained `by hand'. Indeed, recall that $J_0$ is obtained from $\gamma_0$ after blowing down $\gamma_2\cup\gamma_3$. Since the link $\gamma_2 \cup \gamma_3$ is a $-1$-framed unlink and $\lk(\gamma_0,\gamma_2) = \lk(\gamma_0,\gamma_3) =  0$, by~\cite[Theorem~5.17]{HeddenRaoux} $\tau(J_0) \le 0$. A lower bound can be obtained from a convenient description of the knot $J_0$ as a banding of a 3-component link along two bands. First, a series of isotopies (included in \Cref{non-consecutive}) show that although the sublink $\gamma_0\cup\gamma_2$ is not the unlink, it is nevertheless a slice link. This gives a description of $\gamma_0$ as the banding of a $2$-component unlink $\alpha'_1\cup \alpha'_2$ along a band $b_1$. See the left image of \Cref{L023}. Next, incorporate $\gamma_3$ into the picture and compute $\lk(\alpha'_i,\gamma_3)=1$, with $\alpha'_1$ a meridional curve of $\gamma_3$. The curve $\alpha'_2$ is not quite a meridian for $\gamma_3$, but it is the banding of a meridian $\alpha_2$, and a curve $\alpha_0$ unlinked from both $\gamma_2$ and $\gamma_3$ (although not simultaneously). See the right image of \Cref{L023}. The curve $\gamma_0$ can thus be described as the banding of an unlink $\alpha_0\cup\alpha_1\cup\alpha_2$ along two bands $b_1,b_2$. Blowing down $\gamma_2\cup\gamma_3$ transforms $\alpha_0$ into $T_{2,-3}$, and $\alpha_1\cup\alpha_2$ into a Hopf link as in the middle of \Cref{J0}. Attaching a 0-framed band joining the two components of the Hopf link (say parallel to the curve $\gamma_3$ in the left diagram in \Cref{J0}) gives a planar cobordism from $J_0$ to another Hopf link with oppositely oriented components, thus showing that $g_4(J_0)\leq 1$. Since $|\tau|$ is a lower bound for $g_4$, we have that $-1 \le \tau(J_0) \le 0$. A posteriori, since $\tau(J_0) = -1$, we deduce that $g_4(J_0) = 1$.
\end{remark}

\begin{figure}
\centering
\def\svgwidth{0.4\textwidth}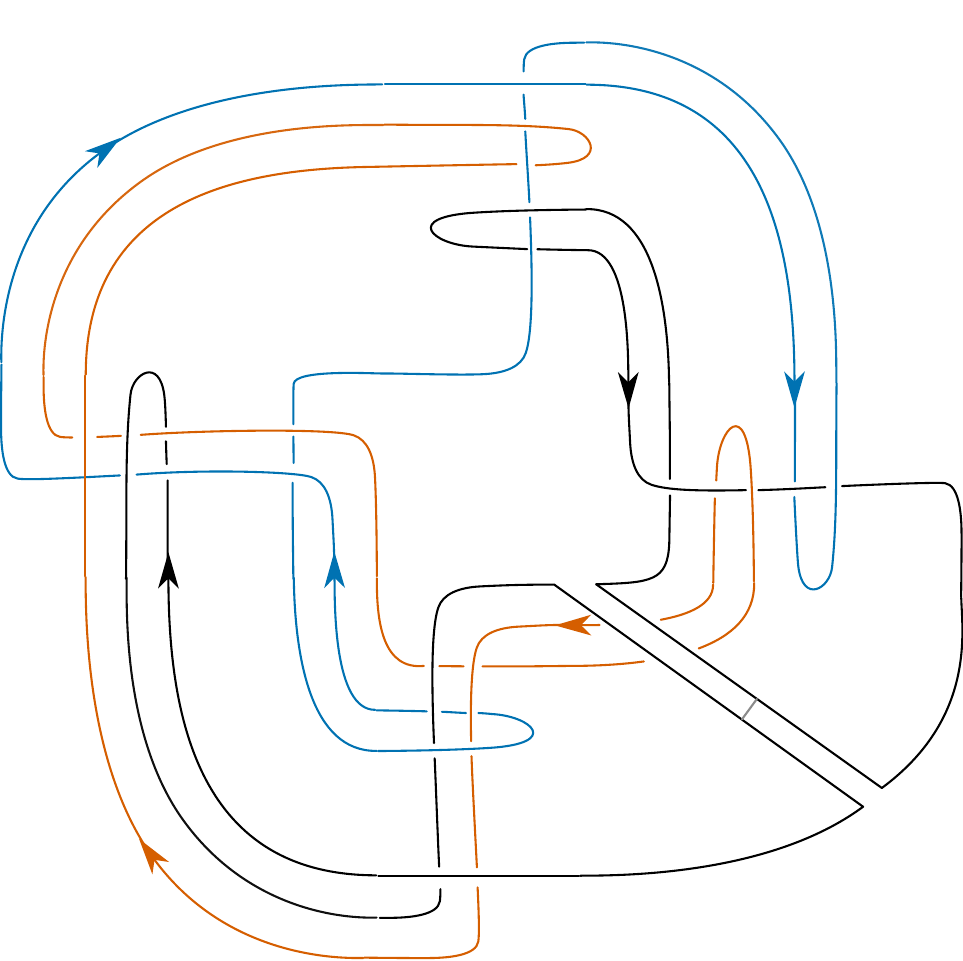\qquad\qquad
\def\svgwidth{0.4\textwidth}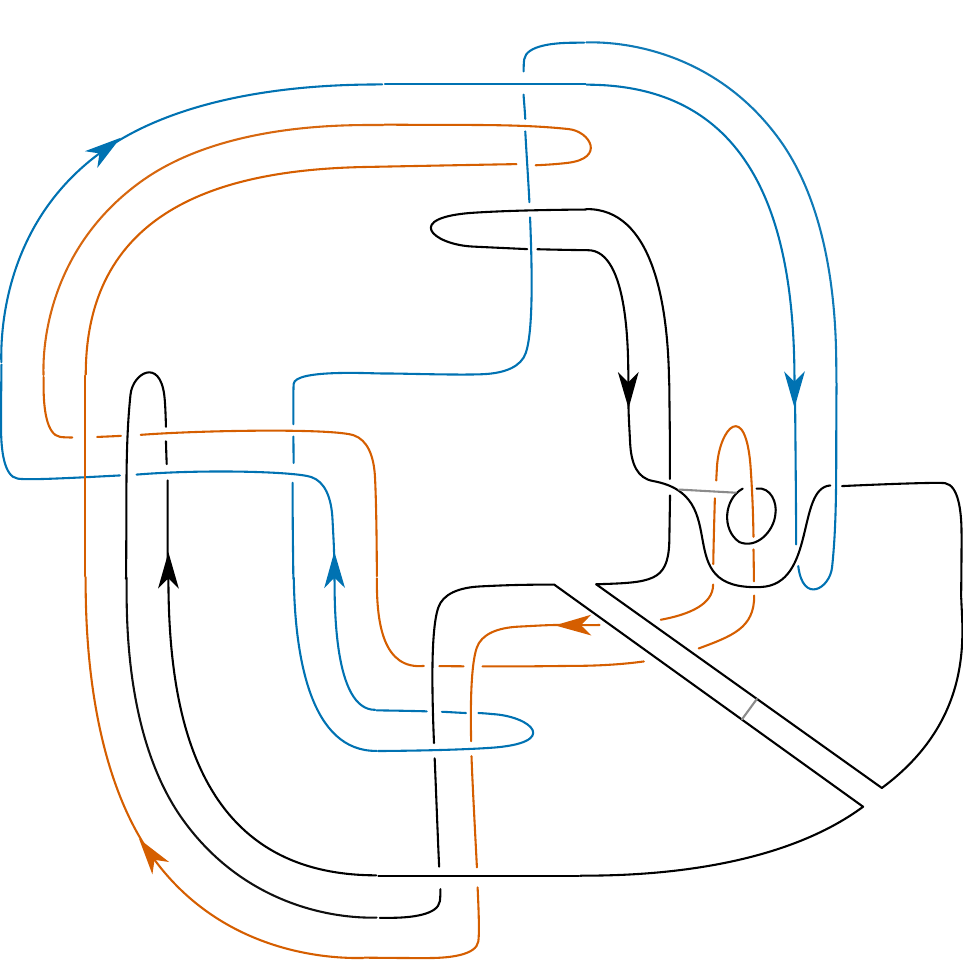
\caption{The sublink $\gamma_0\cup \gamma_1\cup \gamma_2$. The bands appear as grey intervals.}\label{L023}
\end{figure}

\begin{figure}
\centering
\centering
\def\svgwidth{0.375\textwidth}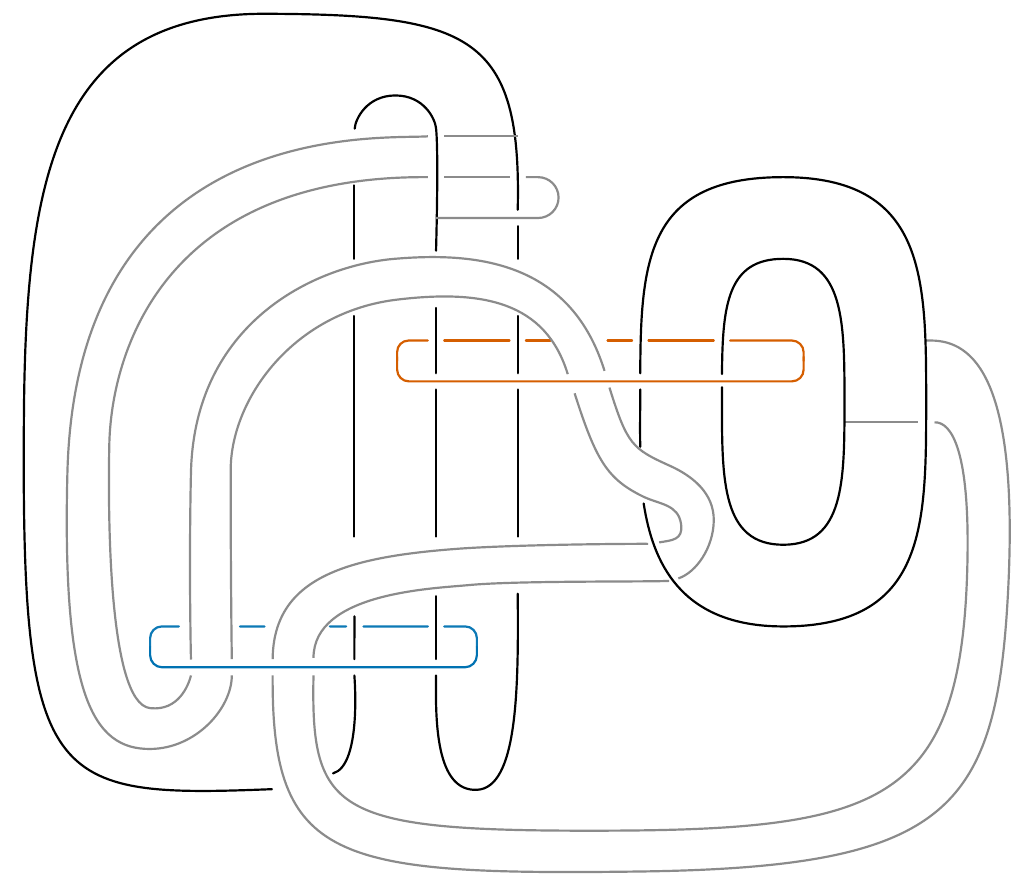 \qquad
\def\svgwidth{0.15\textwidth}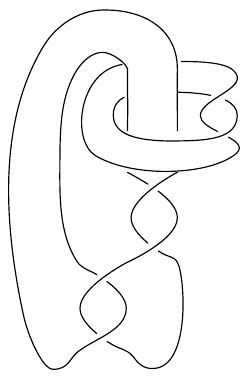 \qquad
\def\svgwidth{0.325\textwidth}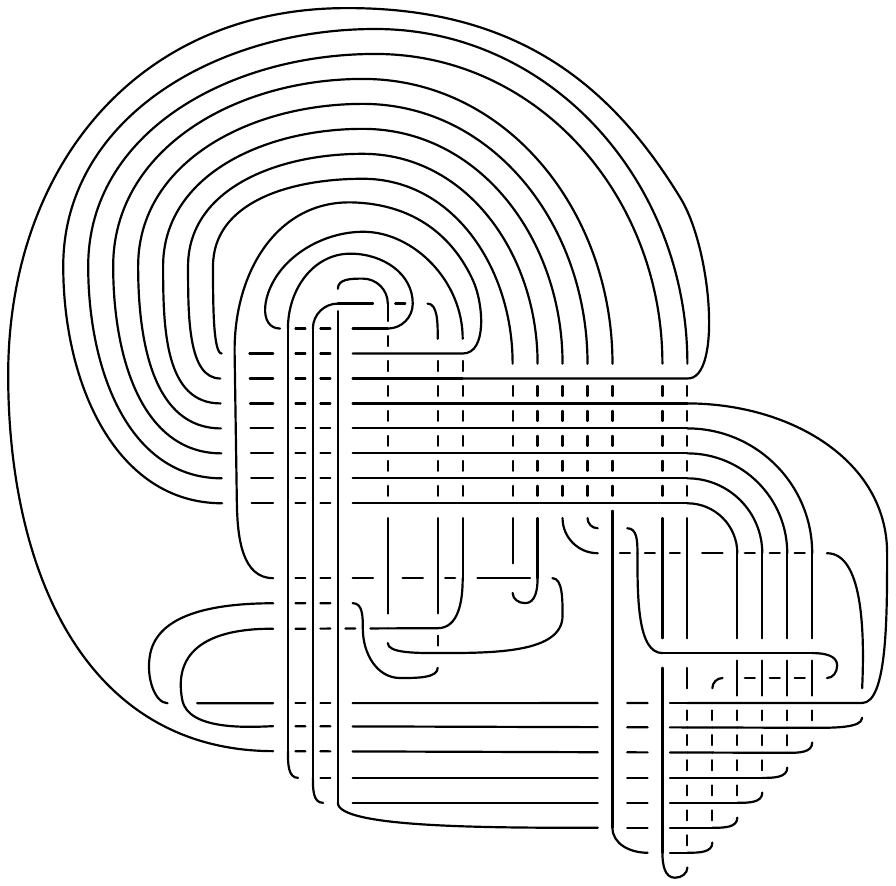

\caption{Left: The sublink $\gamma_0\cup \gamma_1\cup \gamma_2$, with $\gamma_0$ represented as a band sum. The numbers next to the bands represent the number of half-twists. Both curves $\gamma_2$ and $\gamma_3$ have framing $-1$. Middle: The effect of blowing down $\gamma_1\cup \gamma_2$ on $\alpha_0\cup \alpha_1\cup \alpha_2$. This is a 3-component link  Right: The knot $J_0$ obtained from $\gamma_0$ after blowing down $\gamma_2\cup\gamma_3$.}\label{J0}
\end{figure}

We're now in position to prove \Cref{order-cover}, asserting that $\Sigma_4(C)$ has infinite order in the rational homology cobordism group.

\begin{proof}[Proof of \Cref{order-cover}]
By \Cref{p:two-component}, $\Sigma=\Sigma_4(C)$ is obtained by doing $-1$-surgery along each of the components of the link $J_0 \cup J_1$. Let $W$ be the cobordism from $S^3$ to $\Sigma$ obtained from $[0,1]\times S^3$ by attaching two 2-handles, along $J_0$ and $J_1$, and each with framing $-1$. Denote  $\Sigma' = S^3_{-1}(J_0)$, and decompose $W$ as $X_{-1}(J_0)\cup_{\Sigma'} W'$, where $X_{-1}(J_0)$ is the trace cobordism from $S^3$ to $\Sigma'$, and $W'$ the result of attaching the remaining 2-handle to a collar of $\Sigma'$, along the knot $J_1$ (regarded now as a knot in $\Sigma'$.) Since the linking number of $J_0$ and $J_1$ is $0$, the framing number of $J_0$ in $\Sigma'$ is still $-1$ and so $W'$ is a negative definite manifold (as is $W$).  

We can now readily obtain bounds for $d(\Sigma')$ and $d(\Sigma)$. Indeed, by \Cref{p:tau-negative} $\tau(J_0) < 0$, and so from \Cref{t:homwu} we conclude $d(\Sigma') > 0$. Next, the third item from \Cref{t:dinv} implies
\[
d := d(\Sigma) \ge d(\Sigma') > 0.
\]
Then, for any integer $n \neq 0$ we have $d(\#^n\Sigma_4(C)) = nd \neq 0$ and so $\#^n\Sigma_4(C)$ does not bound a rational homology ball by \Cref{t:dinv}. Therefore, $\Sigma_4(C)$ has infinite order in $\Theta^3_\Q$.
\end{proof}

\begin{remark}\label{r:summand}
Correction terms of branched covers have been used in ~\cite[Theorem~1.1]{ManolescuOwens} and \cite{Jabuka} to define concordance invariants.
In particular, since branched covers of the form $\Sigma_{2^k}(K)$ are $\Z/2\Z$-homology 3-spheres, they admit a unique spin structure $\mathfrak t_{\rm spin}$, and the map $\delta_{2^k} \colon \Con \to \Q$ defined as
\[
\delta_{2^k}(K):= 2d(\Sigma_{2^k}(K),\mathfrak{t}_{\rm spin})
\]
is an integer-valued homomorphism.
A common method for exhibiting summands of the concordance group $\Con$ is to produce an integer valued homomorphism that evaluates to $1$. For the case at hand, no homomorphism $\delta_{2^k} $ (or any $\delta_q$ for a prime power $q$) can be used to prove that $C$ generates a summand. This is because since $\Delta_C(t) = 1$, all its cyclic covers are integer homology spheres, and therefore $\delta_q(C) \in 4\Z$ for each $q$. Nevertheless, the fact that $d(\Sigma_4(C)) \neq 0$ also implies that $C$ is not divisible by arbitrarily large integers in $\Con$. Indeed, the fact that $\delta_4(C) = 2d \ge 4$ (and $d$ divisible by $2$) shows that the divisors of the class of $C$ in $\Con$ are necessarily divisors of $d$. 

As a final point, we remark that it is currently unknown if there is arbitrary torsion in $\Con$, or if there are (non-trivial) elements that are divisible by arbitrarily large integers.
This prevents us from being able to prove that $C$ even \emph{belongs} to a $\Z$-summand in $\Con$, even up to torsion.
\end{remark}

\begin{remark}
One can give an estimate on $d(\Sigma_4(C))$ in terms of slice surfaces bounded by $J_0 \cup J_1$.
For instance, one can show that $g_4(J_0) = 1$, which, by~\cite{Rasmussen-GodaTeragaito, NiWu}, implies that $d(S^3_{-1}(J_0))= 2$.
If one could show that $J_1$ is slice in $S^3_{-1}(J_0)$ (in the sense that it bounds a smoothly embedded disc in $[0,1]\times S^3_{-1}(J_0)$), then it would follow that $d(\Sigma_4(C)) = 2$, and so $\delta_4(C)=4$.
Even in this case, the best conclusion would be the existence of a $\Z$-summand in $\Con_\Delta$, the subgroup of $\Con$ consisting of knots with trivial Alexander polynomial.
\end{remark}

\section{The generalised Conway knots $C_{2,n}$}
In this section, we prove \Cref{C2n}, asserting that the Conway knots $C_{2,n}$, too, have infinite order in $\Con$.

\begin{figure}
\centering
\def\svgwidth{0.275\textwidth}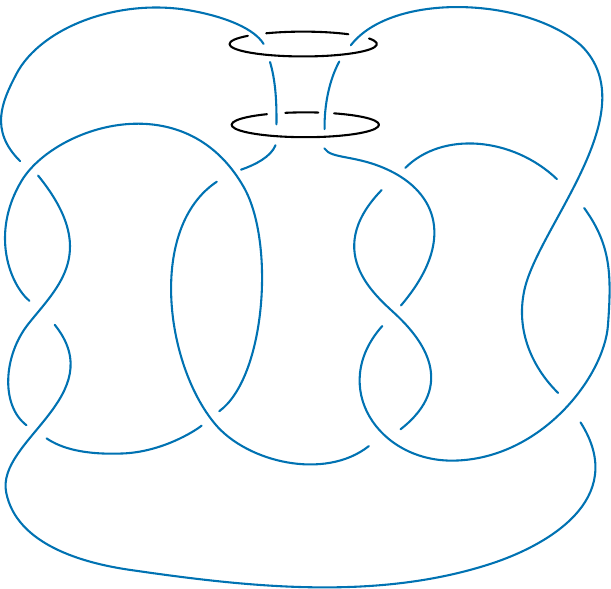 \qquad
\caption{A surgery description of the knot $C_{2,n}$ that gives rise to a negative-definite cobordism from $C'=C_{2,0}$ to $C_{2,n}$.}\label{f:cobordismC2n-bis}
\end{figure}

\begin{proof}[Proof of \Cref{C2n}]
Similar to \Cref{order-C}, the result will be established after showing that the $d$-invariant of $\Sigma_n=\Sigma_4(C_{2,n})$ is non-trivial for every $n\neq 0$. Since $m\left(C_{2,n}\right)=C_{2,-n}$, it is then enough to show that $d(\Sigma_4(C_{2,n})) >0$ for $n>0$. We will do so by exhibiting a negative definite cobordism $X_n$ from $\Sigma=\Sigma_{1}=\Sigma_4(C)$ to $\Sigma_n$. 

To describe $X_n$, consider the cobordism from $C'=C_{2,0}$ to $C_{2,n}$, shown in \Cref{f:cobordismC2n-bis}. 
That is, consider the blow-up $P_n = [0,1] \times S^3 \# n\overline{\mathbb{CP}}\vphantom{\C}^2$, and notice that there is a null-homologous genus-0 cobordism $F_n\subset P_n$ from $C'$ to $C_{2,n}$.
Let $W_n$ be the 4-fold cover of $P_n$ branched over $F_n$, and notice that $W_n$ is the result of attaching $4n$ 2-handles along the lifts of $n$ $0$-framed push-offs of the curve $\gamma$ to $S^3=\Sigma_4(C')$.
In the proof of \Cref{p:two-component}, we showed that (1) $\gamma$ lifts to $\gamma_0\cup \gamma_1\cup \gamma_2\cup \gamma_3$, (2) the Seifert framing on $\gamma$ lifts to the Seifert framings on each $\gamma_i$, and (3) $\lk(\gamma_i, \gamma_j) = 0$ (see \Cref{4-fold}).
It thus follows that $W_n$ has intersection form isomorphic to $\langle -1\rangle^{\oplus 4n}$, and so $W_n$ is negative definite for any $n\geq 1$. Finally,  notice that $W_n$ factors as $W_n=W_1\underset{\Sigma}{\cup} X_n$ with $X_n:= \left(W_n\setminus W_1\right)$. That is, $X_n$ is the result of factoring $W_n$ through $\Sigma$.
Since $\Sigma$ is an integer homology sphere, $X_n$ is a negative-definite cobordism from $\Sigma=\Sigma_4(C)$ to $\Sigma_n$.

This, together with \Cref{t:dinv}, shows that $d(\Sigma_4(C_{2,n})) \ge d(\Sigma_4(C))$. Since the latter was proven to be strictly positive in the proof of \Cref{order-cover}, we have that $\Sigma_n=\Sigma_4(C_{2,n})$ has infinite order in $\Theta^3_\Q$, and therefore $C_{2,n}$ has infinite order in $\Con$.
\end{proof}

\vfill

\pagebreak
\appendix

\section{Isotopies}
This appendix shows the explicit isotopies used in \Cref{p:two-component}, and in the description of $J_0$ included in \Cref{tau-bounds}.  
\begin{figure}[H]
\centering
\includegraphics[width=0.8125\textwidth]{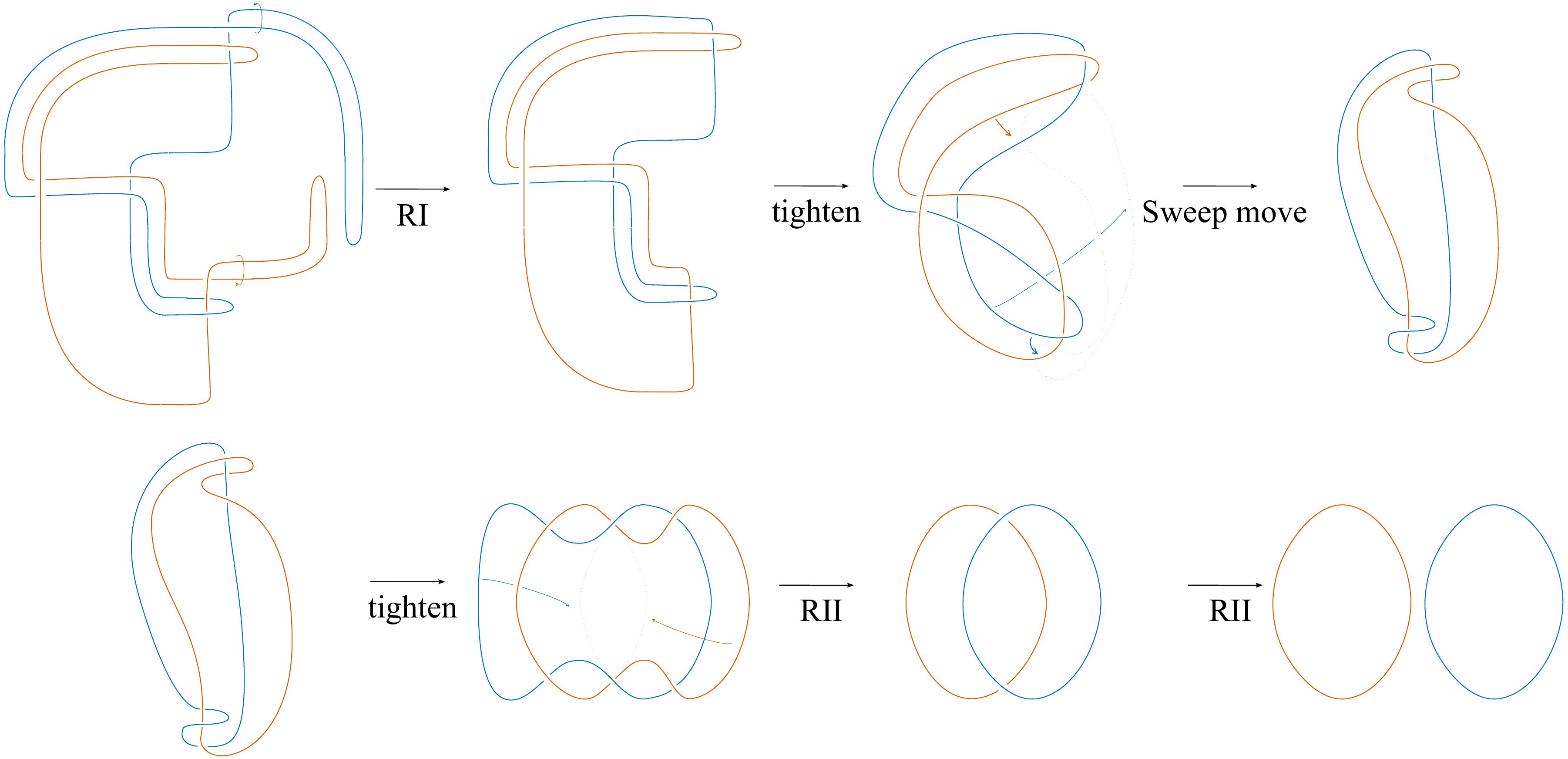}
\caption{Isotopies showing the equivalence between $\gamma_i\cup \gamma_j$ and the 2-component unlink for $|i-j|=1 \pmod 4 $. }\label{consecutive}
\end{figure}

\begin{figure}[H]
\centering
\includegraphics[width=0.8125\textwidth]{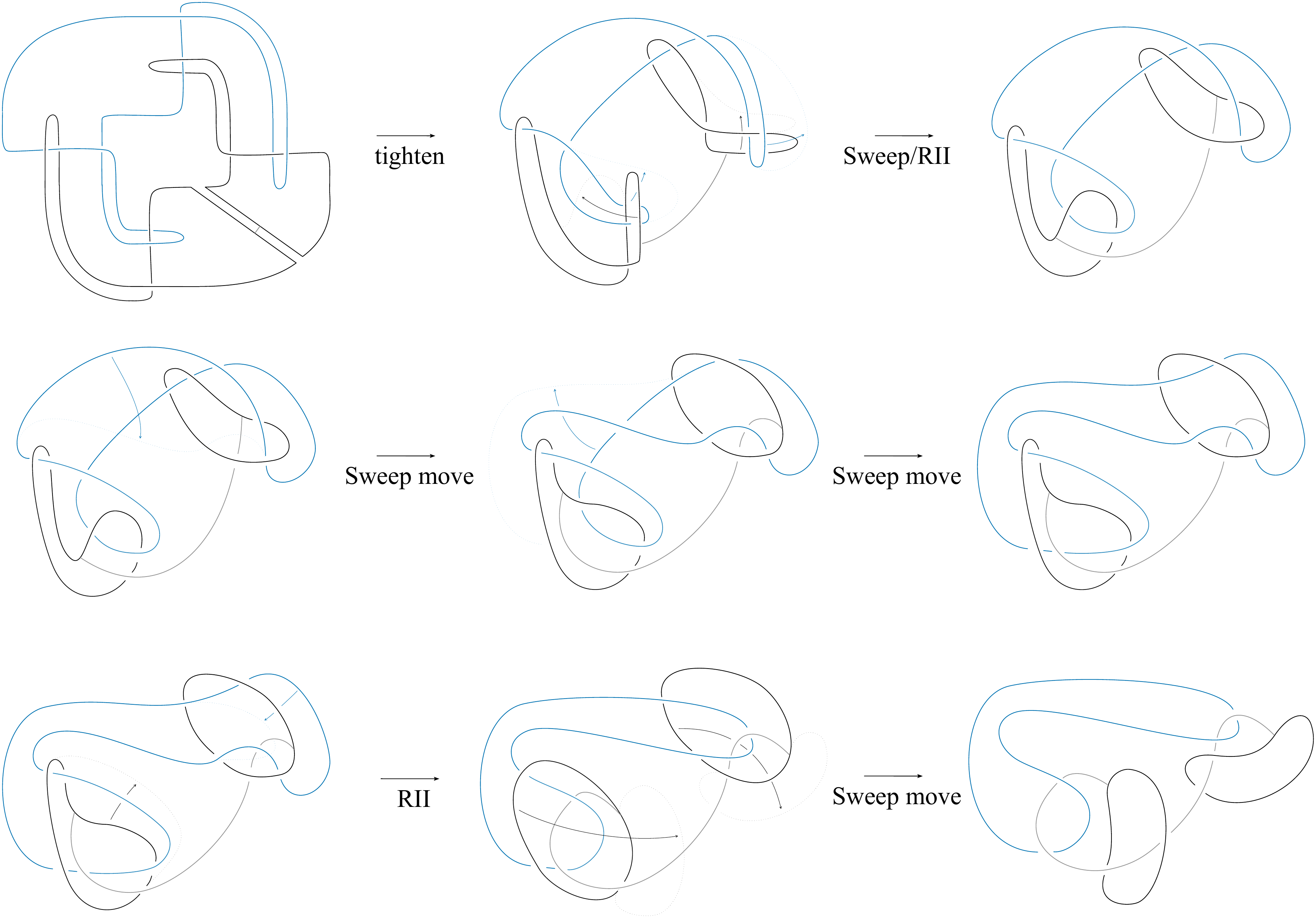}
\caption{Diagrams of the sublink $\gamma_i\cup\gamma_j$ with $|i-j|=2\pmod 4$. In the first diagram the curve $\gamma_0$ is realized as the banded sum of two unknots. The last diagram shows that these links are slice.}\label{non-consecutive}
\end{figure}

\bibliographystyle{alphaurl}
\bibliography{conway}

\end{document}

%% file: Conway.pdf_tex
\begingroup%
  \makeatletter%
  \providecommand\color[2][]{%
    \errmessage{(Inkscape) Color is used for the text in Inkscape, but the package 'color.sty' is not loaded}%
    \renewcommand\color[2][]{}%
  }%
  \providecommand\transparent[1]{%
    \errmessage{(Inkscape) Transparency is used (non-zero) for the text in Inkscape, but the package 'transparent.sty' is not loaded}%
    \renewcommand\transparent[1]{}%
  }%
  \providecommand\rotatebox[2]{#2}%
  \newcommand*\fsize{\dimexpr\f@size pt\relax}%
  \newcommand*\lineheight[1]{\fontsize{\fsize}{#1\fsize}\selectfont}%
  \ifx\svgwidth\undefined%
    \setlength{\unitlength}{293.09566227bp}%
    \ifx\svgscale\undefined%
      \relax%
    \else%
      \setlength{\unitlength}{\unitlength * \real{\svgscale}}%
    \fi%
  \else%
    \setlength{\unitlength}{\svgwidth}%
  \fi%
  \global\let\svgwidth\undefined%
  \global\let\svgscale\undefined%
  \makeatother%
  \begin{picture}(1,0.97259944)%
    \lineheight{1}%
    \setlength\tabcolsep{0pt}%
    \put(0,0){\includegraphics[width=\unitlength,page=1]{Conway.pdf}}%
    \put(0.45,0.05){\color{azul}{$C$}}
  \end{picture}%
\endgroup%

%% file: C-unknotting.pdf_tex
\begingroup%
  \makeatletter%
  \providecommand\color[2][]{%
    \errmessage{(Inkscape) Color is used for the text in Inkscape, but the package 'color.sty' is not loaded}%
    \renewcommand\color[2][]{}%
  }%
  \providecommand\transparent[1]{%
    \errmessage{(Inkscape) Transparency is used (non-zero) for the text in Inkscape, but the package 'transparent.sty' is not loaded}%
    \renewcommand\transparent[1]{}%
  }%
  \providecommand\rotatebox[2]{#2}%
  \newcommand*\fsize{\dimexpr\f@size pt\relax}%
  \newcommand*\lineheight[1]{\fontsize{\fsize}{#1\fsize}\selectfont}%
  \ifx\svgwidth\undefined%
    \setlength{\unitlength}{293.09566227bp}%
    \ifx\svgscale\undefined%
      \relax%
    \else%
      \setlength{\unitlength}{\unitlength * \real{\svgscale}}%
    \fi%
  \else%
    \setlength{\unitlength}{\svgwidth}%
  \fi%
  \global\let\svgwidth\undefined%
  \global\let\svgscale\undefined%
  \makeatother%
  \begin{picture}(1,1)%
    \lineheight{1}%
    \setlength\tabcolsep{0pt}%
    \put(0,0){\includegraphics[width=\unitlength,page=1]{C-unknotting.pdf}}%
    \put(0.3,0.8){$\gamma$}%
    \put(0.65,0.775){$-1$}%
    \put(0.45,0.05){\color{azul}$C'$}
  \end{picture}%
\endgroup%

%% file: C-unknotted.pdf_tex
\begingroup%
  \makeatletter%
  \providecommand\color[2][]{%
    \errmessage{(Inkscape) Color is used for the text in Inkscape, but the package 'color.sty' is not loaded}%
    \renewcommand\color[2][]{}%
  }%
  \providecommand\transparent[1]{%
    \errmessage{(Inkscape) Transparency is used (non-zero) for the text in Inkscape, but the package 'transparent.sty' is not loaded}%
    \renewcommand\transparent[1]{}%
  }%
  \providecommand\rotatebox[2]{#2}%
  \newcommand*\fsize{\dimexpr\f@size pt\relax}%
  \newcommand*\lineheight[1]{\fontsize{\fsize}{#1\fsize}\selectfont}%
  \ifx\svgwidth\undefined%
    \setlength{\unitlength}{298.18882019bp}%
    \ifx\svgscale\undefined%
      \relax%
    \else%
      \setlength{\unitlength}{\unitlength * \real{\svgscale}}%
    \fi%
  \else%
    \setlength{\unitlength}{\svgwidth}%
  \fi%
  \global\let\svgwidth\undefined%
  \global\let\svgscale\undefined%
  \makeatother%
  \begin{picture}(1,1)%
    \lineheight{1}%
    \setlength\tabcolsep{0pt}%
    \put(0,0){\includegraphics[width=\unitlength,page=1]{C-unknotted.pdf}}%
    \put(1,0.35){$\gamma$}%
    \put(0.85,0.85){$-1$}%
    \put(0.95,0.6){\color{azul}$C'$}
  \end{picture}%
\endgroup%

%% file: C_r,n.pdf_tex
\begingroup%
  \makeatletter%
  \providecommand\color[2][]{%
    \errmessage{(Inkscape) Color is used for the text in Inkscape, but the package 'color.sty' is not loaded}%
    \renewcommand\color[2][]{}%
  }%
  \providecommand\transparent[1]{%
    \errmessage{(Inkscape) Transparency is used (non-zero) for the text in Inkscape, but the package 'transparent.sty' is not loaded}%
    \renewcommand\transparent[1]{}%
  }%
  \providecommand\rotatebox[2]{#2}%
  \newcommand*\fsize{\dimexpr\f@size pt\relax}%
  \newcommand*\lineheight[1]{\fontsize{\fsize}{#1\fsize}\selectfont}%
  \ifx\svgwidth\undefined%
    \setlength{\unitlength}{311.81102362bp}%
    \ifx\svgscale\undefined%
      \relax%
    \else%
      \setlength{\unitlength}{\unitlength * \real{\svgscale}}%
    \fi%
  \else%
    \setlength{\unitlength}{\svgwidth}%
  \fi%
  \global\let\svgwidth\undefined%
  \global\let\svgscale\undefined%
  \makeatother%
  \begin{picture}(1,0.7)%
    \lineheight{1}%
    \setlength\tabcolsep{0pt}%
    \put(0,0){\includegraphics[width=\unitlength,page=1]{C_r,n.pdf}}%
    \put(0.455,0.525){$2n$}
    \put(0.0175,0.325){\tiny$r+1$}
    \put(0.3125,0.325){$-r$}
    \put(0.55,0.325){\tiny$-r-1$}
    \put(0.875,0.325){$r$}
  \end{picture}%
\endgroup%

%% file: KT_r,n.pdf_tex
\begingroup%
  \makeatletter%
  \providecommand\color[2][]{%
    \errmessage{(Inkscape) Color is used for the text in Inkscape, but the package 'color.sty' is not loaded}%
    \renewcommand\color[2][]{}%
  }%
  \providecommand\transparent[1]{%
    \errmessage{(Inkscape) Transparency is used (non-zero) for the text in Inkscape, but the package 'transparent.sty' is not loaded}%
    \renewcommand\transparent[1]{}%
  }%
  \providecommand\rotatebox[2]{#2}%
  \newcommand*\fsize{\dimexpr\f@size pt\relax}%
  \newcommand*\lineheight[1]{\fontsize{\fsize}{#1\fsize}\selectfont}%
  \ifx\svgwidth\undefined%
    \setlength{\unitlength}{311.81102362bp}%
    \ifx\svgscale\undefined%
      \relax%
    \else%
      \setlength{\unitlength}{\unitlength * \real{\svgscale}}%
    \fi%
  \else%
    \setlength{\unitlength}{\svgwidth}%
  \fi%
  \global\let\svgwidth\undefined%
  \global\let\svgscale\undefined%
  \makeatother%
  \begin{picture}(1,0.68181818)%
    \lineheight{1}%
    \setlength\tabcolsep{0pt}%
    \put(0,0){\includegraphics[width=\unitlength,page=1]{C_r,n.pdf}}%
    \put(0.455,0.525){$2n$}
    \put(0.0175,0.325){\tiny$r+1$}
    \put(0.3125,0.325){$-r$}
    \put(0.6,0.325){$r$}
    \put(0.8,0.325){\tiny$-r-1$}
  \end{picture}%
\endgroup%

%% file: 4fold.pdf_tex
\begingroup%
  \makeatletter%
  \providecommand\color[2][]{%
    \errmessage{(Inkscape) Color is used for the text in Inkscape, but the package 'color.sty' is not loaded}%
    \renewcommand\color[2][]{}%
  }%
  \providecommand\transparent[1]{%
    \errmessage{(Inkscape) Transparency is used (non-zero) for the text in Inkscape, but the package 'transparent.sty' is not loaded}%
    \renewcommand\transparent[1]{}%
  }%
  \providecommand\rotatebox[2]{#2}%
  \newcommand*\fsize{\dimexpr\f@size pt\relax}%
  \newcommand*\lineheight[1]{\fontsize{\fsize}{#1\fsize}\selectfont}%
  \ifx\svgwidth\undefined%
    \setlength{\unitlength}{462.0499258bp}%
    \ifx\svgscale\undefined%
      \relax%
    \else%
      \setlength{\unitlength}{\unitlength * \real{\svgscale}}%
    \fi%
  \else%
    \setlength{\unitlength}{\svgwidth}%
  \fi%
  \global\let\svgwidth\undefined%
  \global\let\svgscale\undefined%
  \makeatother%
  \begin{picture}(1,0.99788073)%
    \lineheight{1}%
    \setlength\tabcolsep{0pt}%
    \put(0,0){\includegraphics[width=\unitlength,page=1]{4fold.pdf}}%
            \put(0.5,0.425){$\gamma_0$}
    \put(0.4,0.47){\color{rojo}$\gamma_3$}
     \put(0.45,0.57){\color{azul}$\gamma_2$}
    \put(0.545,0.51){\color{verde}$\gamma_1$}
    \put(0.025,0.2){\color{rojo}$-1$}
    \put(0.9,0.8){\color{verde}$-1$}
    \put(0.2,0.925){\color{azul}$-1$}
     \put(0.8,0.075){$-1$}
  \end{picture}%
\endgroup%

%% file: L023-A.pdf_tex
\begingroup%
  \makeatletter%
  \providecommand\color[2][]{%
    \errmessage{(Inkscape) Color is used for the text in Inkscape, but the package 'color.sty' is not loaded}%
    \renewcommand\color[2][]{}%
  }%
  \providecommand\transparent[1]{%
    \errmessage{(Inkscape) Transparency is used (non-zero) for the text in Inkscape, but the package 'transparent.sty' is not loaded}%
    \renewcommand\transparent[1]{}%
  }%
  \providecommand\rotatebox[2]{#2}%
  \newcommand*\fsize{\dimexpr\f@size pt\relax}%
  \newcommand*\lineheight[1]{\fontsize{\fsize}{#1\fsize}\selectfont}%
  \ifx\svgwidth\undefined%
    \setlength{\unitlength}{462.0499258bp}%
    \ifx\svgscale\undefined%
      \relax%
    \else%
      \setlength{\unitlength}{\unitlength * \real{\svgscale}}%
    \fi%
  \else%
    \setlength{\unitlength}{\svgwidth}%
  \fi%
  \global\let\svgwidth\undefined%
  \global\let\svgscale\undefined%
  \makeatother%
  \begin{picture}(1,1)%
    \lineheight{1}%
    \setlength\tabcolsep{0pt}%
    \put(0,0){\includegraphics[width=\unitlength,page=1]{L023-A.pdf}}%
     \put(0.4,0.47){\color{rojo}$\gamma_3$}
     \put(0.45,0.57){\color{azul}$\gamma_2$}
    \put(0.9,0.8){$-1$}
    \put(0.2,0.925){$-1$}
     \put(0.7,0.125){$\alpha'_1$}
     \put(0.9,0.275){$\alpha'_2$}
  \end{picture}%
\endgroup%

%% file: L023-B.pdf_tex
\begingroup%
  \makeatletter%
  \providecommand\color[2][]{%
    \errmessage{(Inkscape) Color is used for the text in Inkscape, but the package 'color.sty' is not loaded}%
    \renewcommand\color[2][]{}%
  }%
  \providecommand\transparent[1]{%
    \errmessage{(Inkscape) Transparency is used (non-zero) for the text in Inkscape, but the package 'transparent.sty' is not loaded}%
    \renewcommand\transparent[1]{}%
  }%
  \providecommand\rotatebox[2]{#2}%
  \newcommand*\fsize{\dimexpr\f@size pt\relax}%
  \newcommand*\lineheight[1]{\fontsize{\fsize}{#1\fsize}\selectfont}%
  \ifx\svgwidth\undefined%
    \setlength{\unitlength}{462.0499258bp}%
    \ifx\svgscale\undefined%
      \relax%
    \else%
      \setlength{\unitlength}{\unitlength * \real{\svgscale}}%
    \fi%
  \else%
    \setlength{\unitlength}{\svgwidth}%
  \fi%
  \global\let\svgwidth\undefined%
  \global\let\svgscale\undefined%
  \makeatother%
  \begin{picture}(1,1)%
    \lineheight{1}%
    \setlength\tabcolsep{0pt}%
    \put(0,0){\includegraphics[width=\unitlength,page=1]{L023-B.pdf}}%
  \put(0.4,0.47){\color{rojo}$\gamma_3$}
     \put(0.45,0.57){\color{azul}$\gamma_2$}
    \put(0.9,0.8){$-1$}
    \put(0.2,0.925){$-1$}
     \put(0.75,0.55){$\alpha_2$}
     \put(0.9,0.275){$\alpha_0$}
     \put(0.7,0.125){$\alpha_1$}
  \end{picture}%
\endgroup%

%% file: J0.pdf_tex
\begingroup%
  \makeatletter%
  \providecommand\color[2][]{%
    \errmessage{(Inkscape) Color is used for the text in Inkscape, but the package 'color.sty' is not loaded}%
    \renewcommand\color[2][]{}%
  }%
  \providecommand\transparent[1]{%
    \errmessage{(Inkscape) Transparency is used (non-zero) for the text in Inkscape, but the package 'transparent.sty' is not loaded}%
    \renewcommand\transparent[1]{}%
  }%
  \providecommand\rotatebox[2]{#2}%
  \newcommand*\fsize{\dimexpr\f@size pt\relax}%
  \newcommand*\lineheight[1]{\fontsize{\fsize}{#1\fsize}\selectfont}%
  \ifx\svgwidth\undefined%
    \setlength{\unitlength}{496.06299213bp}%
    \ifx\svgscale\undefined%
      \relax%
    \else%
      \setlength{\unitlength}{\unitlength * \real{\svgscale}}%
    \fi%
  \else%
    \setlength{\unitlength}{\svgwidth}%
  \fi%
  \global\let\svgwidth\undefined%
  \global\let\svgscale\undefined%
  \makeatother%
  \begin{picture}(1,1)%
    \lineheight{1}%
    \setlength\tabcolsep{0pt}%
    \put(0,0){\includegraphics[width=\unitlength,page=1]{J0.pdf}}%
    \put(0.41,0.175){\color{azul}{$\gamma_2$}}
    \put(0.7,0.45){\color{rojo}$\gamma_3$}
    \put(0.1,0.85){$\alpha_0$}
    \put(0.7,0.7){$\alpha_1$}
    \put(0.7,0.61){$\alpha_2$}
    \put(0.95,0.25){\color{gray} $+1$}
    \put(0.85,0.25){\color{gray} $0$}
  \end{picture}%
\endgroup%

%% file: J0-link.pdf_tex
\begingroup%
  \makeatletter%
  \providecommand\color[2][]{%
    \errmessage{(Inkscape) Color is used for the text in Inkscape, but the package 'color.sty' is not loaded}%
    \renewcommand\color[2][]{}%
  }%
  \providecommand\transparent[1]{%
    \errmessage{(Inkscape) Transparency is used (non-zero) for the text in Inkscape, but the package 'transparent.sty' is not loaded}%
    \renewcommand\transparent[1]{}%
  }%
  \providecommand\rotatebox[2]{#2}%
  \newcommand*\fsize{\dimexpr\f@size pt\relax}%
  \newcommand*\lineheight[1]{\fontsize{\fsize}{#1\fsize}\selectfont}%
  \ifx\svgwidth\undefined%
    \setlength{\unitlength}{197.53588819bp}%
    \ifx\svgscale\undefined%
      \relax%
    \else%
      \setlength{\unitlength}{\unitlength * \real{\svgscale}}%
    \fi%
  \else%
    \setlength{\unitlength}{\svgwidth}%
  \fi%
  \global\let\svgwidth\undefined%
  \global\let\svgscale\undefined%
  \makeatother%
  \begin{picture}(1,1.55419278)%
    \lineheight{1}%
    \setlength\tabcolsep{0pt}%
    \put(0,0){\includegraphics[width=\unitlength,page=1]{J0-link.pdf}}%
  \end{picture}%
\endgroup%

%% file: J0-snappy.pdf_tex
\begingroup%
  \makeatletter%
  \providecommand\color[2][]{%
    \errmessage{(Inkscape) Color is used for the text in Inkscape, but the package 'color.sty' is not loaded}%
    \renewcommand\color[2][]{}%
  }%
  \providecommand\transparent[1]{%
    \errmessage{(Inkscape) Transparency is used (non-zero) for the text in Inkscape, but the package 'transparent.sty' is not loaded}%
    \renewcommand\transparent[1]{}%
  }%
  \providecommand\rotatebox[2]{#2}%
  \newcommand*\fsize{\dimexpr\f@size pt\relax}%
  \newcommand*\lineheight[1]{\fontsize{\fsize}{#1\fsize}\selectfont}%
  \ifx\svgwidth\undefined%
    \setlength{\unitlength}{429.73796694bp}%
    \ifx\svgscale\undefined%
      \relax%
    \else%
      \setlength{\unitlength}{\unitlength * \real{\svgscale}}%
    \fi%
  \else%
    \setlength{\unitlength}{\svgwidth}%
  \fi%
  \global\let\svgwidth\undefined%
  \global\let\svgscale\undefined%
  \makeatother%
  \begin{picture}(1,0.98943282)%
    \lineheight{1}%
    \setlength\tabcolsep{0pt}%
    \put(0,0){\includegraphics[width=\unitlength,page=1]{J0-snappy.pdf}}%
  \end{picture}%
\endgroup%

%% file: C_r,n-unknotting.pdf_tex
\begingroup%
  \makeatletter%
  \providecommand\color[2][]{%
    \errmessage{(Inkscape) Color is used for the text in Inkscape, but the package 'color.sty' is not loaded}%
    \renewcommand\color[2][]{}%
  }%
  \providecommand\transparent[1]{%
    \errmessage{(Inkscape) Transparency is used (non-zero) for the text in Inkscape, but the package 'transparent.sty' is not loaded}%
    \renewcommand\transparent[1]{}%
  }%
  \providecommand\rotatebox[2]{#2}%
  \newcommand*\fsize{\dimexpr\f@size pt\relax}%
  \newcommand*\lineheight[1]{\fontsize{\fsize}{#1\fsize}\selectfont}%
  \ifx\svgwidth\undefined%
    \setlength{\unitlength}{293.09566227bp}%
    \ifx\svgscale\undefined%
      \relax%
    \else%
      \setlength{\unitlength}{\unitlength * \real{\svgscale}}%
    \fi%
  \else%
    \setlength{\unitlength}{\svgwidth}%
  \fi%
  \global\let\svgwidth\undefined%
  \global\let\svgscale\undefined%
  \makeatother%
  \begin{picture}(1,1)%
    \lineheight{1}%
    \setlength\tabcolsep{0pt}%
    \put(0,0){\includegraphics[width=\unitlength,page=1]{C_r,n-unknotting.pdf}}%
     \put(0.65,0.75){$-1$}
     \put(0.65,0.875){$-1$}
     \put(0.4775,0.7925){$\vdots$}
     \put(0.25,0.8){$n\left\{\begin{array}{l}\phantom{a}\\\phantom{a}\end{array}\right.$}
  \end{picture}%
\endgroup%